\newcommand{\cA}{{\cal A}}
\newcommand{\cB}{{\cal B}}
\newcommand{\cE}{{\cal E}}
\newcommand{\ZZ}{\mathbb{Z}}
\newcommand{\RR}{\mathbb{R}}
\newcommand{\NN}{\mathbb{N}}
\newcommand{\CC}{\mathbb{C}}
\newcommand{\Ab}{{\boldsymbol{A}}}
\newcommand{\Bb}{{\boldsymbol{B}}}
\newcommand{\Cb}{{\boldsymbol{C}}}
\newcommand{\Db}{{\boldsymbol{D}}}
\newcommand{\Eb}{{\boldsymbol{E}}}
\newcommand{\Fb}{{\boldsymbol{F}}}
\newcommand{\Gb}{{\boldsymbol{G}}}
\newcommand{\Ib}{{\boldsymbol{I}}}
\newcommand{\Jb}{{\boldsymbol{J}}}
\newcommand{\Kb}{{\boldsymbol{K}}}
\newcommand{\Lb}{{\boldsymbol{L}}}
\newcommand{\Pb}{{\boldsymbol{P}}}
\newcommand{\Rb}{{\boldsymbol{R}}}
\newcommand{\Ub}{{\boldsymbol{U}}}
\newcommand{\Vb}{{\boldsymbol{V}}}
\newcommand{\ab}{{\boldsymbol{a}}}
\newcommand{\cb}{{\boldsymbol{c}}}
\newcommand{\db}{{\boldsymbol{d}}}
\newcommand{\eb}{{\boldsymbol{e}}}
\newcommand{\fb}{{\boldsymbol{f}}}
\newcommand{\hb}{{\boldsymbol{h}}}
\newcommand{\yb}{{\boldsymbol{y}}}
\newcommand{\BZero}{{\boldsymbol{0}}}
\newtheorem{proposition}{Proposition}
\newtheorem{theorem}{Theorem}
\newtheorem{lemma}{Lemma}
\begin{document}

\begin{frontmatter}

\title{From full rank subdivision schemes to multichannel wavelets: A constructive approach\let\thefootnote\relax\footnotetext{\emph{''NOTICE: this is the authors' version of a work that was accepted for publication 
			in \emph{Wavelets and Multiscale Analysis: Theory and Applications} edited by J. Cohen And A. Zayed, Birkh\"auser, Boston (USA) (ISBN: 978-0-8176-8094-7). 
			Changes resulting from the publishing process, such as peer review, editing, corrections,
			structural formatting, and other quality control mechanisms may not be reflected in this document. Changes may have been made to this work since it was submitted for publication''}}}

\author{Mariantonia Cotronei%\corref{cor1}
}
%\ead{mariantonia.cotronei@unirc.it}
\address{DIMET, 
Universit\`a Mediterranea di Reggio Calabria,\\Via Graziella loc. Feo di Vito, I-89122 Reggio Calabria, Italy\\
mariantonia.cotronei@unirc.it}

\author{Costanza Conti}
%\ead{hols@math.uni-potsdam.de}

\address{Dipartimento di Energetica  ''Sergio Stecco'', Universit\`a di Firenze, Via  Lombroso 6/17, I-50134 Firenze, Italy,\\
costanza.conti@unifi.it}

\begin{abstract}
In this paper, we  describe some recent results obtained in the context of vector subdivision schemes which possess the so-called full rank property. Such kind of schemes, in particular those which have an interpolatory nature,  are connected to matrix refinable functions generating orthogonal multiresolution analyses for the space of vector-valued signals. Corresponding multichannel (matrix) wavelets can be defined and their construction in terms of a very efficient scheme is given.
Some examples illustrate the nature of these matrix scaling functions/wavelets.
\end{abstract}

\begin{keyword}
Full rank matrix filters \sep Multichannel wavelets \sep Multiwavelets \sep Vector subdivision schemes
%% keywords here, in the form: keyword \sep keyword
\MSC 65T60
%% MSC codes here, in the form: \MSC code \sep code
%% or \MSC[2008] code \sep code (2000 is the default)

\end{keyword}

\end{frontmatter}

\section{Introduction}
This paper deals with the construction of
the proper ''wavelet'' tools for
the analysis of functions which are \emph{vector-valued}.  Such type of
functions arises naturally in many applications  where the data to be processed are samples of vector-valued functions, even in
several variables (e.g. electroencephalographic measurements, seismic waves, color images).
\par
Up to now, most signal processing methods
handle vector valued data component by component and thus ignore
the possible relationship between several of the components. In
reality, on the other hand, such measurements are usually related
and thus should be processed as ``complete'' vectors.
While it is obvious that for vector valued
signals a standard scalar wavelet analysis does not take into
account the correlations among components, even  an approach based
on the well known \emph{multiwavelets}, introduced in  \cite{GHM}, \cite{Strela} and widely studied (see \cite{Keinert} and the exhaustive references therein),  is only apparently
justified by the ''vectorization'' step required in order to apply  them. Indeed, multiwavelets still generate
multiresolution analyses for the space of scalar-valued
functions.
\par
In \cite{Xia}, \cite{XiaSuter} \emph{ matrix wavelets} have been proposed for the analysis of matrix valued signals while in \cite{BacchelliCotroneiSauer02a} the concept of \emph{multichannel wavelet} (MCW) and \emph{multichannel multiresolution analysis} (MCMRA) have been introduced
for processing vector-valued data. Actually, the two concepts look very similar: the crucial point of both is the existence of a  matrix refinable function, which satisfies the so-called \emph{full rank condition}. However,  on one hand, the matrix analysis scheme proposed in \cite{Xia}, \cite{XiaSuter}  coincides with the applications of a fixed number of MCW schemes, on the other hand, only a very special characterization of filters is proposed, which gives rise  to interpolatory orthonormal matrix refinable functions. Thanks to this special structure, the corresponding wavelet filters are easily found as in the scalar situation. Nevertheless, in \cite{Xia}, \cite{XiaSuter} the authors do not provide either any constructive strategy nor any example. Some examples of $2\times 2$ matrix wavelets are derived in \cite{Walden}, but they strongly rely on the sufficient condition for convergence described in \cite{Xia}, and belong again to a very special class. In passing, we also mention reference \cite{Fowler} since, in spite of the title, it actually contains examples of wavelet filters specifically designed for vector-fields.
\par
In some recent papers \cite{ContiCotroneiSauer08},
\cite{ContiCotroneiSauer07S}, \cite{ContiCotroneiSauer07}  we investigated  \emph{full rank interpolatory schemes} and showed their connection to matrix refinable functions and multichannel wavelets.
In particular we proved  that there is  a connection between \emph{cardinal} and \emph{orthogonal} matrix refinable functions which is based on the \emph{spectral factorization} of the (positive definite) symbol related to the cardinal function, thus, as a by-product, giving a concrete way to obtain orthogonal matrix scaling functions. The corresponding  multichannel wavelets, whose existence is proved in  \cite{BacchelliCotroneiSauer02a}, can be found in terms of a symbol which satisfies  \emph{matrix quadrature mirror filter} equations.
\par
In this paper, we show how  the solution of matrix quadrature mirror filter equations  can be found. In particular we give an efficient and constructive scheme which makes use again of spectral factorization techniques and of a matrix completion algorithm based on the resolution of generalized Bezout identities.
\par

\smallskip The paper is organized as follows. In Section~2 all the needed notation and definitions are set.
 Some introductory results on full rank vector subdivision schemes are also presented  and their connection to
orthogonal full rank refinable functions is shown. In Section~3, the concepts of
MCMRA and MCW are revised while their orthogonal counterparts are considered next in Section~4. 
Section~5 deals with the construction of
 multichannel wavelets and an explicit algorithm for such construction is presented.
 In Section~6, starting from some interpolatory full
rank positive definite symbols, the associated multichannel  wavelets
with two and three channels are constructed and shown. 
Some conclusions are derived in Section~7.

%%%%%%%%%%%%%%%%%%%%%%%%%%%%%%%%%%%%%%%%%%%%%%%%%%%%%%%%%%%%%%%%%%%%%%
\section{Notation and basic facts on vector subdivision schemes}
For $r, s \in \NN$ we write a
 matrix $\Ab \in \RR^{r
\times s}$ as $\Ab = ( a_{jk}:$ \ $j = 1,\dots,r,\ k =1,\dots,s)$ and denote by $\ell^{r \times s} \left( \ZZ
\right)$ the space of all $r \times s$--matrix valued bi--infinite
sequences, $\cA = \left( \Ab_j\in \RR^{r \times s} \;:\; j
  \in \ZZ \right)$.
For notational simplicity we write $\ell^r (\ZZ)$ for $\ell^{r
\times 1} \left( \ZZ \right)$ and $\ell (\ZZ)$ for $\ell^{1}
\left( \ZZ \right)$ and denote vector sequences by lowercase
letters.
\par
By  $\ell_2^{r \times s} \left( \ZZ
\right)$ we denote the space of all sequences which have finite $2$ norm defined as
\begin{equation}
  \label{eq:ANormDef}
 \left\| \cA \right\|_2:=
     \left(\sum_{j\in \ZZ}\left| \Ab_j \right|^2_2\right)^{1/2},
    \end{equation} where
    $\left| \cdot \right|_2$ denotes the
$2$ operator norm for $r\times s$ matrices.
\par
Moreover, $L^{r\times s}_2(\RR)$
 will denote the
Banach space of all $r \times s$--matrix valued functions on $\RR$
with components in $L_2(\RR)$ and norm
\begin{equation}\label{eq:FNormDef}
\|\Fb\|_2=\left(\sum_{k=1}^{s}\sum_{j=1}^{r}\int_{\RR}|F_{jk}(x)|^2\, dx
\right)^{1/2}.
\end{equation}
\par
For a matrix function and a matrix sequence of suitable sizes we introduce the
\emph{convolution} ``$*$'' defined as, 
\begin{eqnarray*}
\left( \Fb * \cB \right):= \sum_{k \in \ZZ} \Fb(\cdot -k)
\, \Bb_k.
\end{eqnarray*}
For two matrix functions $\Fb,\ \Gb \in L^{r\times s}_2(\RR)$, their inner product is
defined as
\begin{eqnarray*}
\langle \Gb, \Fb\rangle=\int_{\RR} \Gb^H(x)\Fb(x)\, dx.
\end{eqnarray*}
 and we have
$$\|\Fb\|_2= \left(\mbox{trace}\, \langle \Fb, \Fb\rangle \right)^\frac 12.$$
\par
Let now $\cA$ be  a \emph{matrix
sequence} $ \cA = \left( \Ab_j \in \RR^{r \times r} \;:\; j \in
\ZZ \right)\in \ell^{r\times r}(\ZZ)$. Let $c$ be any
bi--infinite \emph{vector sequence} $ c = \left( \cb_j \in \RR^r
\;:\; j \in \ZZ \right) $. The \emph{vector subdivision operator} $S_{\cA}$,
based on the matrix mask $\cA$,   acts on $c$ by means
of
\begin{eqnarray*}
S_\cA c = \left( \sum_{k \in \ZZ} \Ab_{j-2k} \, \cb_k \;:\; j \in
\ZZ \right).
\end{eqnarray*}
\par
 A \emph{vector subdivision scheme } consists of
iterative applications of a vector subdivision operator starting
from an initial vector sequence $c\in \ell^r(\ZZ)$, namely:
$$\begin{array}{ll}
c^0&:=c \\
c^{n}&:=S_\cA c^{n-1}=S^n_\cA c^{0}\,\quad n\ge 1.
\end{array}$$
It is {\em $L_2$-convergent} if
 there exists a
 \emph{matrix-valued function} $\Fb\in L^{r\times r}_2(\RR)$
such that
\begin{equation}\label{def:conv_p} \lim_{n \to
\infty} 2^{-n/2}\| \mu^n(\Fb)-S_\cA^n \delta \Ib
  \|_2 = 0,
\end{equation}
where the mean value operator at level $n\in \NN$ is defined
 as
$$
 (\mu^n(\Fb))_j:=2^n\int_{2^{-n}(j+[0,1])}\Fb(t)dt,\quad j\in \ZZ.
$$
The matrix-valued function  $\Fb$ associated in such a way with a
convergent subdivision scheme is called the \emph{basic limit
function} and it is {\em refinable} with respect to $\cA$, that is
$$  \Fb = \Fb * \Ab \left( 2 \cdot \right) = \sum_{j \in \ZZ} \Fb \left( 2 \cdot
  - j \right) \, \Ab_j.
$$
\par
The {\em symbol} of a subdivision scheme $S_{\cA}$ or of the  mask
$\cA$ is defined as:
$$\Ab (z) = \sum_{j \in \ZZ} \Ab_j \, z^j, \qquad z \in \CC \setminus
\{0\},
$$
and the {\em subsymbols}, $\Ab_\varepsilon(z) = \sum_{j \in \ZZ}
\Ab_{\varepsilon+2j }\, z^j,\ z\in \CC \setminus \{ 0 \}, \
\varepsilon
  \in \{0,1\},$ are related to the symbol by
\begin{equation}\label{subsymbols}\Ab(z)=\Ab_0(z^2)+z\, \Ab_1(z^2), \quad z \in \CC \setminus
\{0\}.\end{equation}
Next, we define  the two $r\times r$ matrices
$$
\Ab_\varepsilon :=  \Ab_\varepsilon(1)=\sum_{j \in \ZZ}
\Ab_{\varepsilon+2j},\quad \varepsilon\in \{0,1\}
$$
and their joint $1$--eigenspace
$$
\cE_\Ab :=\left\{ \yb \in \RR^r \;:\; \Ab_0\, \yb = \Ab_1 \, \yb =
\yb
  \right\}.
$$
The \emph{rank} of the mask $\cA$ or of the  subdivision scheme
$S_\cA$ is the number
$$  R(\cA) := \dim \cE_\Ab
$$
satisfying $1 \le R(\cA) \le r$ for convergent schemes. In
particular, $S_\cA$ is said to be \emph{of full rank} if $R(\cA) =
r$.
\par
The following proposition gives equivalent characterizations of full rank schemes
\cite{ContiCotroneiSauer08}
\begin{proposition}
Let $ S_\cA$ be a vector subdivision scheme with symbol $\Ab(z)$.
Then the following statements are equivalent:
\begin{itemize}
  \item[\emph{i) } ]  $S_\cA$ is of full rank;
   \item[\emph{ii) } ]
  the symbol satisfies $\Ab(1) = 2\Ib$ and $\Ab(-1) = \BZero,$
    or, equivalently, there exists a matrix mask $\cB\in \ell^{r\times r}(\ZZ)$
    such that  $\Ab(z)=(z+1)\Bb(z)$, with $\Bb(1)=\Ib$ (scalar-like factorization);
    \item[\emph{iii) } ] the scheme preserves all vector constant data, i.e
    whenever $\cb_j = \db$, $j \in \ZZ$, for
some $\db \in \RR^r$, then also $\left( S_\cA c \right)_j = \db,
\quad j \in \ZZ$;
 \item[\emph{iv) } ] the basic limit function $\Fb$ is a
partition of the identity, i.e. $\displaystyle{\sum_{j\in \ZZ} \Fb(\cdot -j )=\Ib}$.
\end{itemize}
\end{proposition}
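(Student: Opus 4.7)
The plan is to prove the cycle $(i) \Rightarrow (ii) \Rightarrow (iii) \Rightarrow (i)$ together with the separate equivalence $(ii) \Leftrightarrow (iv)$. The first three implications are purely algebraic, exploiting the relationship between the subsymbols at $z = \pm 1$ and the joint eigenspace $\cE_\Ab$, while the equivalence with $(iv)$ requires passing through the refinement equation for the basic limit function.

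For $(i) \Leftrightarrow (ii)$ the key observation is that $R(\cA) = r$ is equivalent to $\cE_\Ab = \RR^r$, which forces $\Ab_0 = \Ab_1 = \Ib$. From the subsymbol decomposition (\ref{subsymbols}), evaluating at $z = \pm 1$ gives $\Ab(1) = \Ab_0 + \Ab_1$ and $\Ab(-1) = \Ab_0 - \Ab_1$, so the pair of conditions $\Ab(1) = 2\Ib$ and $\Ab(-1) = \BZero$ is equivalent to $\Ab_0 = \Ab_1 = \Ib$. The scalar-like factorization follows because $\Ab(-1) = \BZero$ means $(z+1)$ divides every entry of $\Ab(z)$; the normalization $\Bb(1) = \Ib$ is then forced by $\Ab(1) = 2\Bb(1) = 2\Ib$.

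For $(ii) \Leftrightarrow (iii)$, I would just split the subdivision sum by the parity of the output index: for constant input $\cb_j \equiv \db$,
\[
(S_\cA c)_{2m} = \Big(\sum_k \Ab_{2m-2k}\Big)\db = \Ab_0\,\db, \qquad (S_\cA c)_{2m+1} = \Ab_1\,\db,
\]
so the scheme preserves constants iff $\Ab_0\,\db = \Ab_1\,\db = \db$ for every $\db \in \RR^r$, which is exactly $(ii)$.

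The delicate step is $(ii) \Leftrightarrow (iv)$. Set $\Gb(x) := \sum_k \Fb(x-k)$ and substitute the refinement $\Fb = \Fb * \Ab(2\cdot)$; reindexing the double sum by $m = 2k+j$ yields
\[
\Gb(x) = \sum_m \Fb(2x - m)\,\Cb_m, \qquad \Cb_{2n} = \Ab_0, \quad \Cb_{2n+1} = \Ab_1.
\]
Under $(ii)$ all $\Cb_m = \Ib$, so $\Gb(x) = \Gb(2x)$; combined with the regularity/decay of $\Gb$ inherited from $\Fb \in L^{r\times r}_2(\RR)$ and the normalization $\hat\Fb(0) = \Ib$ that comes from the $L_2$-convergence (\ref{def:conv_p}) of $S_\cA^n\,\delta\Ib$ to $\Fb$, this self-similarity forces $\Gb \equiv \Ib$. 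Conversely, if $\Gb \equiv \Ib$ then Poisson summation gives $\hat\Fb(0) = \Ib$ and $\hat\Fb(n) = \BZero$ for $n \in \ZZ \setminus \{0\}$, and the refinement in frequency $\hat\Fb(\xi) = \tfrac12 \hat\Fb(\xi/2)\,\Ab(e^{-\pi i \xi})$ evaluated at $\xi = 0$ delivers $\Ab(1) = 2\Ib$, while iterating along the dyadic orbit of any odd integer delivers $\Ab(-1) = \BZero$. The main obstacle will be precisely the conversion of the formal self-similarity $\Gb(x) = \Gb(2x)$ into the pointwise conclusion $\Gb \equiv \Ib$: one must rule out non-constant $2$-scale fixed points, which is where the compact support / $L_2$-decay of $\Fb$ (equivalently the continuity of $\hat\Fb$ at $0$) must be invoked.
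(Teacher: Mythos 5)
The paper itself offers no proof of this proposition --- it is quoted verbatim from [ContiCotroneiSauer08] --- so your attempt can only be measured against the standard arguments. Your chain i) $\Leftrightarrow$ ii) $\Leftrightarrow$ iii) is correct and complete: $\Ab(1)=\Ab_0+\Ab_1$ and $\Ab(-1)=\Ab_0-\Ab_1$ reduce ii) to $\Ab_0=\Ab_1=\Ib$, i.e.\ to $\cE_\Ab=\RR^r$, the parity splitting of $(S_\cA c)_j$ for constant data gives iii), and the factorization $\Ab(z)=(z+1)\Bb(z)$ is just the statement that $z=-1$ annihilates every Laurent polynomial entry.

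The genuine gap is in iv) $\Rightarrow$ ii), specifically in deriving $\Ab(-1)=\BZero$. From $\sum_j\Fb(\cdot-j)=\Ib$ Poisson summation gives $\widehat\Fb(n)=\delta_{n0}\Ib$, and the refinement equation in frequency, $\widehat\Fb(\xi)=\frac12\widehat\Fb(\xi/2)\Ab(e^{-\pi i\xi})$, evaluated at an odd integer $q$ yields only
$$\widehat\Fb(q/2)\,\Ab(-1)=\BZero,\qquad q \ \mbox{odd}.$$
``Iterating along the dyadic orbit'' of $2^kq$ telescopes straight back to this same relation, since the intermediate factors are all $\Ab(1)=2\Ib$; no new information appears. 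To conclude $\Ab(-1)=\BZero$ you must exclude a nonzero vector $\vb$ in the range of $\Ab(-1)$ with $\widehat\Fb(q/2)\,\vb=\BZero$ for every odd $q$, and that requires an extra nondegeneracy input --- stability of the integer translates of $\Fb$, or the necessary spectral conditions that $L_2$-convergence imposes on $\Ab_0,\Ab_1$ --- which you never invoke. Ironically, the step you flag as the ``main obstacle'' is the easy one: a $1$-periodic, locally square-integrable $\Gb$ with $\Gb=\Gb(2\cdot)$ has Fourier coefficients satisfying $c_m=0$ for odd $m$ and $c_{2n}=c_n$, hence $c_m=0$ for all $m\neq 0$, so $\Gb$ is a.e.\ constant with no appeal to decay beyond integrability. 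What does deserve an argument in the ii) $\Rightarrow$ iv) direction is the normalization $\int_\RR\Fb=\Ib$ identifying that constant; it follows from the definition of $L_2$-convergence together with $\sum_j(S^n_\cA\delta\Ib)_j=\Ab(1)^n=2^n\Ib$, but ``comes from the $L_2$-convergence'' is an assertion, not a proof.
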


\par
A full rank vector subdivision scheme is \emph{interpolatory} if
it is characterized by the property $$ \left( S_\cA c \right)_{2j}
= \cb_j,\ j \in \ZZ, $$ or, equivalently,
$$\Ab_{2j} = \delta_{j0} \, \Ib, \qquad  j \in \ZZ.$$
Its associated basic limit function, whenever it exists, turns out to be {\em cardinal},
i.e.,$$\Fb (j) = \delta_{j0} \, \Ib, \qquad  j \in \ZZ.$$
The
connection between interpolatory and full rank subdivision schemes
is expressed by the following result \cite{ContiCotroneiSauer08}:

\begin{proposition}
An interpolatory subdivision scheme with symbol $\Ab(z)$ is of
full rank if and only if  $\ \Ab (1) = 2\Ib$.
\end{proposition}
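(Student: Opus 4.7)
The plan is to reduce the statement to Proposition~1, which characterizes full rank by the pair of conditions $\Ab(1)=2\Ib$ and $\Ab(-1)=\BZero$. Concretely, I want to show that, under the interpolatory hypothesis, the second condition is automatically a consequence of the first, so that the single identity $\Ab(1)=2\Ib$ is enough.

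First I would translate the interpolatory condition $\Ab_{2j}=\delta_{j0}\Ib$ into a statement about the subsymbols via \eqref{subsymbols}. Since the even-indexed coefficients collapse to a single nonzero term $\Ab_0=\Ib$, the even subsymbol is the constant matrix polynomial $\Ab_0(w)\equiv \Ib$, and hence
$$
\Ab(z)=\Ib+z\,\Ab_1(z^2),\qquad z\in\CC\setminus\{0\}.
$$
Evaluating at $z=1$ and $z=-1$ gives $\Ab(1)=\Ib+\Ab_1(1)$ and $\Ab(-1)=\Ib-\Ab_1(1)$, so $\Ab(1)+\Ab(-1)=2\Ib$ regardless of the mask.

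From this identity the equivalence is immediate: $\Ab(1)=2\Ib$ forces $\Ab(-1)=\BZero$, and conversely if the scheme is of full rank then in particular $\Ab(1)=2\Ib$ by Proposition~1. Combining these two implications with Proposition~1 itself yields the stated characterization.

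There is no real obstacle here; the only substantive step is recognizing that the interpolatory condition is exactly equivalent to $\Ab_0(w)\equiv \Ib$, after which the condition $\Ab(-1)=\BZero$ in Proposition~1 becomes redundant because the even and odd subsymbols at $z=\pm 1$ are rigidly coupled by the trivial form of $\Ab_0$.
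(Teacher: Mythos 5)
Your argument is correct. The paper itself does not prove this proposition but only cites \cite{ContiCotroneiSauer08}; your reduction is the natural one and works: the interpolatory condition $\Ab_{2j}=\delta_{j0}\Ib$ is exactly $\Ab_0(z)\equiv\Ib$, which via $\Ab(z)=\Ab_0(z^2)+z\,\Ab_1(z^2)$ gives $\Ab(1)+\Ab(-1)=2\Ib$, so $\Ab(1)=2\Ib$ automatically forces $\Ab(-1)=\BZero$ and Proposition~1 finishes the equivalence. (One could even bypass Proposition~1 entirely: interpolation gives $\Ab_0=\Ab_0(1)=\Ib$ for free, and $\Ab(1)=2\Ib$ is then equivalent to $\Ab_1=\Ab_1(1)=\Ib$, i.e.\ to $\cE_\Ab=\RR^r$ directly from the definition of rank.)
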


%%%%%%%%%%%%%%%%%%%%%%%%%%%%%%%%%%%%%%%%%%
\section{Multichannel multiresolution analysis and multichannel wavelets }
As in the scalar situation, a \emph{multichannel multiresolution analysis} (MCMRA) in the space $L_2^r\left( \RR \right)$ of square integrable vector valued functions can be defined b a nested sequence $\cdots\subset V_{-1}\subset V_0 \subset V_1 \subset \cdots $ of closed
  subspaces of $L_2^r\left( \RR \right)$
  with the properties that
  \begin{enumerate}
  \item they are {\em shift invariant}
    $$
    \hb \in V_j \qquad \Longleftrightarrow \qquad \hb \left( \cdot + k
    \right) \in V_j, \quad k \in \ZZ;
    $$
  \item they are {\em  scaled} versions of each other
    $$
    \hb \in V_0 \qquad \Longleftrightarrow \qquad \hb \left( 2^j \cdot
    \right) \in V_j;
    $$
      \item $\displaystyle{\bigcap_{j\in\ZZ}} V_j=\{[0,\,0\,\ldots\,0]^T\},\quad \displaystyle{\overline{\bigcup_{j\in\ZZ} V_j}=L_2^r\left( \RR \right)};$
  \item the space $V_0$ is generated by the integer translates of $r$
     {\em  function vectors}, that is there exist $\fb^i=\left(f^i_1,\ldots,f^i_r\right)^T\in L^r_2(\RR)$, $i=1,\ldots,r$, such that

\begin{enumerate}
    \item[\emph{i)}]   $\displaystyle{
    V_0 = \mbox{\rm span} \, \left\{ \fb^i(\cdot-k) \,:\; k\in \ZZ,\,i=1,\ldots,r \right\}},
    $
    \item[\emph{ii)}] there exist two constants $K_1\le K_2$ such that
\begin{equation}\label{def:stab}     K_1\left(\sum_{i=1}^r \sum_{k \in
          \ZZ} \left| c^i_{k} \right|^2 \right)^{1/2} \le \left\| \sum_{i=1}^r \sum_{k \in \ZZ} c^i_{k} \fb^j \left(
\cdot
          - k \right) \right\|_2 \le K_2\left(\sum_{j=1}^r \sum_{k \in
          \ZZ} \left| c^i_{k} \right|^2 \right)^{1/2}\end{equation}
          for any $r$ scalar sequences $c^i=\{c^i_k\}\in \ell_2(\ZZ)$, $i=1,\ldots,r$.
\end{enumerate}
  \end{enumerate}
\par
Now, let  $\hb=(h_1,\ldots,h_r)^T\in V_0\subset L^r_2(\RR)$. Then there exist $r$ scalar
sequences
$c^i=\left(c^i_k\in \RR,\ :\ k\in \ZZ\right)\in  \ell_2(\ZZ),\ i=1,\ldots,r$, such that
$$\hb=\sum_{k\in\ZZ}\left( c^1_k \fb^1(\cdot -k )+ \cdots + c^r_k \fb^r(\cdot -k )\right),$$
that is
\begin{eqnarray*}
\left(\begin{array}{c} h_1\\ \vdots \\ h_r\end{array}\right)&=&
\sum_{k\in\ZZ}\left( c^1_k \left(\begin{array}{c} f^1_{1}(\cdot-k)\\
\vdots
\\ f^1_{r}(\cdot-k)\end{array}\right) + \cdots + c^r_k
\left(\begin{array}{c} f^r_{1}(\cdot-k)\\ \vdots \\
f^r_{r}(\cdot-k)\end{array}\right)
\right)\\
&=& \sum_{k\in\ZZ} \underbrace{\left(\begin{array}{ccc} f^{1}_1(\cdot-k) & \cdots & f^r_{1}(\cdot-k) \\
\vdots & & \\
f^1_{r}(\cdot-k) & \cdots &
f^r_{r}(\cdot-k)\end{array}\right)}_{\Fb(\cdot-k)}
\underbrace{\left(\begin{array}{c} c^1_k\\ \vdots \\
c_k^r\end{array}\right)}_{\cb_k}.
\end{eqnarray*}
Thus, any $\hb \in V_0$ can be written as
$\displaystyle{\hb=\Fb*c=\sum_{k\in\ZZ} \Fb(\cdot -k ) \cb_k}$,
where $\Fb \in L_2^{r\times r}(\RR)$ and $c=\left( \cb_k\in \RR^r\
:\ k\in \ZZ\right)\in \ell_2^r(\ZZ)$. Since $\fb^1,\ldots, \fb^r
\in V_0\subset V_1$, we have
$$\fb^i=\sum_{k\in\ZZ} \Fb(2\cdot -k )\ab_k^{i}, \quad i=1,\ldots,r,$$
for some vector sequences
$a^i=\left(
\ab^i_k\in \RR^r,\ :\ k\in \ZZ\right)\in
\ell_2^r(\ZZ)$, $i=1,\ldots,r$.
Thus, the matrix-valued function
$$\Fb=\left(\fb^1|\ldots|\fb^r\right)\in L_2^{r\times r}(\RR)$$
satisfies the \emph{matrix refinement equation}
\begin{equation}\label{eq:RE}
\Fb=\sum_{k\in\ZZ}\Fb (2\cdot-k)\Ab_k
\end{equation}
where $\cA=\left(\Ab_k=\left(\ab^1_k|\ldots|\ab^r_k\right)\in
\RR^{r\times r}\,:\, k\in \ZZ\right)$ is the \emph{refinement mask}.
\par
 Now, since the subdivision scheme $S_{\cal A}$ applied to the initial sequence $\delta\Ib$
converges to a matrix function with $r$
stable columns it follows that $\cE_{\cA}=\RR^r$ so that
$$\Ab_{\varepsilon}=\sum_{j\in \ZZ}\Ab_{\varepsilon-2j}=\Ib,\ \varepsilon\in
\{0,1\},
$$ i.e. $\Ab(1)=2\Ib,\ \Ab(-1)=\BZero$, which is the full rank property of the mask  $\cA$.
\par
 The "joint stability" condition (\ref{def:stab}) on the function vectors $\fb^j,\;
j=1,\ldots,r,$  implies that the function $\Fb$ is stable in the
sense that there exist two constants $K_1\le K_2$ such that
$$   K_1\, \left\|
c\right\|_2\le \left\| \Fb * c \right\|_2 \le K_2\\ \left\|
c\right\|_2, \qquad c
    \in \ell^r_2 (\ZZ).
$$

\par
 Summarizing the described properties, the following important result holds true:
\begin{theorem}
  The subspaces $V_j \subset L_2^{r}(\RR)$, $j \in \ZZ$,
  form an MCMRA if there exists a full rank stable matrix refinable function
  $\Fb \in L_2^{r \times r}(\RR)$ such that
$$    V_j = \left\{ \Fb * c (2^{j}
\cdot )\;:\; c \in \ell^r_2 \left( \ZZ \right)
    \right\}, \qquad j \in \ZZ.
 $$
\end{theorem}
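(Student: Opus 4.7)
The plan is to verify each of the four MCMRA axioms for the subspaces $V_j = \{(\Fb * c)(2^j\cdot) : c \in \ell_2^r(\ZZ)\}$, using the stability and refinability of $\Fb$ together with the full rank property recorded in Proposition~1. Closedness of $V_j$ in $L_2^r(\RR)$, and its natural identification with $\ell_2^r(\ZZ)$, follow immediately from the stability inequality for $\Fb$; the same inequality supplies the Riesz bounds (\ref{def:stab}) for the columns $\fb^i$ of $\Fb$, which takes care of axiom~(4) and the closed-subspace requirement in one stroke.

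For the scaling axiom~(2) the defining formula for $V_j$ makes the claim tautological: the dilation $\hb\mapsto \hb(2\cdot)$ carries $V_j$ onto $V_{j+1}$. For the shift-invariance axiom~(1), at levels $j\ge 0$ it reduces to the reindexing $\cb_m\mapsto \cb_{m+2^j k}$, while at levels $j<0$ I would first invoke the nestedness $V_j\subset V_0$ established below, rewrite the element in terms of integer translates of $\Fb$ via iteration of the refinement equation, and then conclude from the integer-shift invariance of $V_0$.

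Nestedness $V_j\subset V_{j+1}$ follows from substituting the refinement equation~(\ref{eq:RE}) into a generic element $\hb(x)=\sum_m \Fb(2^j x-m)\cb_m$ of $V_j$, yielding
\[
\hb(x)=\sum_{m,l} \Fb(2^{j+1}x-2m-l)\Ab_l\cb_m = \sum_n \Fb(2^{j+1}x-n)\db_n,
\]
with $\db_n=\sum_m \Ab_{n-2m}\cb_m = (S_\cA c)_n$. Since $S_\cA$ is bounded on $\ell_2^r(\ZZ)$ under the implicit decay hypotheses on the mask, we have $\db\in\ell_2^r(\ZZ)$ and hence $\hb\in V_{j+1}$.

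The main obstacle is the remaining part of axiom~(3). Triviality of $\bigcap_j V_j$ I would obtain by the classical MRA argument adapted to the vector setting: dominate the $L_2$-norm of $\hb\in\bigcap_j V_j$ by the $\ell_2$-norm of its coarse-scale coefficient sequence using the lower Riesz bound, and let $j\to-\infty$ to force $\hb=\BZero$. For density of $\bigcup_j V_j$ in $L_2^r(\RR)$, the crucial ingredient is the partition-of-identity relation $\sum_{j\in\ZZ}\Fb(\cdot-j)=\Ib$ supplied by Proposition~1(iv): it guarantees that the quasi-projectors onto $V_j$ behave like a matrix-valued mollifier and converge strongly to the identity on smooth compactly supported test vector functions as $j\to\infty$. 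Adapting the scalar Mallat--Daubechies density argument to the matrix setting, so that this partition-of-identity relation interacts correctly with the Riesz bounds coming from stability, is the technically most delicate piece of the proof.
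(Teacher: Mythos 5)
The paper does not actually prove this implication: the whole of Section~3 preceding the theorem runs in the \emph{opposite} direction (it starts from the MCMRA axioms and derives that $V_0$ is generated by a refinable, full rank, stable $\Fb$), and the theorem is then stated as a summary, with the sufficiency direction left implicit (it is essentially imported from the multichannel-MRA literature). Your axiom-by-axiom verification is therefore a genuinely different and more explicit route, and most of it is sound: closedness of $V_j$ and the Riesz bounds for the columns $\fb^i$ are indeed the same inequality as the stability of $\Fb$ written componentwise; the scaling axiom is definitional; the nestedness computation correctly identifies the new coefficient sequence as $S_\cA c$ (with boundedness of $S_\cA$ on $\ell_2^r(\ZZ)$ guaranteed by the finitely supported mask); and the intersection/density arguments via the lower Riesz bound and the partition-of-identity property are the standard and correct ingredients.

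There is, however, one genuine gap: your treatment of shift invariance at levels $j<0$. Knowing $V_j\subset V_0$ and that $V_0$ is invariant under integer shifts only tells you that $\hb(\cdot+k)\in V_0$; it does not put $\hb(\cdot+k)$ back into $V_j$, so the last step of your argument is a non sequitur. In fact the property as literally written cannot be proved, because it is false in general: for the Haar-type case $r=1$, $V_{-1}$ is spanned by the translates $\chi_{[2k,2k+2)}$, and the integer shift $\chi_{[2k+1,2k+3)}$ does not belong to $V_{-1}$. The axiom has to be read as integer-shift invariance of $V_0$ alone (equivalently, invariance of $V_j$ under shifts from the lattice $2^{-j}\ZZ$, which follows from the $V_0$ statement together with the scaling axiom); with that reading, your reindexing $\cb_m\mapsto\cb_{m+k}$ at level $0$ already settles the matter and the appeal to nestedness should simply be dropped.
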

\par
In analogy to the scalar case, we call $\Fb$ \emph{matrix scaling
function}.

\section{Orthogonal multichannel multiresolution analysis}
For many application purposes, \emph{orthogonal MCMRAs} are the most interesting. They are are
equivalently characterized by the following properties:
\begin{enumerate}
    \item the space $V_0$ is generated by  orthonormal  integer translates of the
    function vectors $\fb^j$, $j=1,\ldots,r$;
   \item  the matrix scaling function $\Fb$ is orthonormal  that is
 $$
\langle\Fb,\Fb(\cdot +k)\rangle:=\int_{\RR} \Fb^H (x) \, \Fb (x+k)
\, dx = \delta_{k0}\,\Ib, \qquad k \in \ZZ.
$$
\end{enumerate}

\par
The key for the construction of orthonormal matrix scaling
functions (and the associated multichannel wavelets) is their
connection with interpolatory vector subdivision schemes, as
established in the following Theorem  where the \emph{canonical
spectral factor} $\Ab(z)$ of a positive definite interpolatory
full rank symbol  $\Cb(z)$ is the symbol satisfying  $\Cb (z)
=\frac 12 \Ab^H (z) \Ab (z)$ and $\Ab (1) = 2 \, \Ib,
    \ \Ab (-1) = \BZero$.

\begin{theorem}\cite{ContiCotroneiSauer08}
Let $\Cb(z)$ be the symbol of an interpolatory full rank vector subdivision scheme. Let $\Cb(z)$
be symmetric strictly positive definite for all $z\ne -1$ such
that $|z|=1$. Then the canonical spectral factor $\Ab(z)$ of
$\Cb(z)$ defines a subdivision scheme $S_{\cal A}$ which converges
in $L_2^{r\times r}(\RR)$ to an orthonormal matrix scaling function
$\Fb$.
\end{theorem}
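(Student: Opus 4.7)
The strategy is to verify in sequence that $\Ab(z)$ generates a full rank subdivision scheme, that it satisfies a matrix quadrature mirror filter (QMF) identity, and that the basic limit function $\Fb$ is $L_2$-orthonormal. The canonical spectral factor satisfies $\Ab(1)=2\Ib$ and $\Ab(-1)=\BZero$ by construction, so Proposition~1 immediately yields that $S_\cA$ is of full rank; the hypothesis that $\Cb$ is strictly positive definite on $\{|z|=1\}\setminus\{-1\}$ is exactly what guarantees the existence of such a factor, and the vanishing of $\Cb$ at $z=-1$ is absorbed by the zero $\Ab(-1)=\BZero$ dictated by the full rank property.

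Next I would derive the matrix QMF identity directly from the interpolatory character of $\Cb$. Since $\Cb_{2j}=\delta_{j0}\Ib$, its even subsymbol is $\Cb_0(z)=\Ib$, and the decomposition (\ref{subsymbols}) applied to $\Cb$ gives $\Cb(z)+\Cb(-z)=2\Cb_0(z^2)=2\Ib$. Substituting $\Cb(z)=\frac{1}{2}\Ab^H(z)\Ab(z)$ then produces
\begin{equation*}
\Ab^H(z)\Ab(z)+\Ab^H(-z)\Ab(-z)=4\Ib,\qquad |z|=1,
\end{equation*}
which is precisely the matrix QMF relation for $\Ab$.

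I would then establish $L_2$-convergence of $S_\cA$ to some $\Fb\in L_2^{r\times r}(\RR)$ and upgrade to orthonormality. Convergence is analyzed via the transition (transfer) operator $T_\cA$ associated with the mask: the matrix QMF identity together with the full rank property reduces the $L_2$-convergence of the cascade iterates to a spectral condition on $T_\cA$ restricted to a suitable invariant subspace, and the strict positive-definiteness of $\Cb$ away from $z=-1$ is precisely what makes this condition hold. Once $\Fb$ exists, I would form the matrix autocorrelation $\Gb_k:=\langle\Fb,\Fb(\cdot+k)\rangle$, whose generating symbol satisfies a refinement-type identity driven exactly by the mask $\cC$. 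The interpolatory, cardinal character of the scheme attached to $\Cb$, combined with the normalization $\sum_k\Gb_k=\Ib$ forced by the QMF identity, then pins down $\Gb_k=\delta_{k0}\Ib$ and yields the orthonormality of $\Fb$.

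The main obstacle I expect is the $L_2$-convergence step: turning the algebraic QMF identity into an analytic convergence statement in the vector-valued setting requires a non-trivial transfer-operator analysis that is genuinely more subtle than its scalar counterpart and leans heavily on the strict positive-definiteness hypothesis on $\Cb$. This is where most of the real analytic work lies and where the companion results in \cite{ContiCotroneiSauer08} would be invoked to close the argument.
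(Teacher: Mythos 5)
The paper itself gives no proof of this theorem: it is imported verbatim from \cite{ContiCotroneiSauer08} and used as a black box (the text only remarks that a construction procedure for $\Fb$ appears there). So there is no in-paper argument to compare against, and your proposal has to be judged on its own merits.

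The algebraic portion of your sketch is sound and is the expected route. The canonical spectral factor is defined precisely by $\Cb(z)=\frac12\Ab^H(z)\Ab(z)$ with $\Ab(1)=2\Ib$, $\Ab(-1)=\BZero$, so Proposition~1 does give full rank; and the identity $\Cb(z)+\Cb(-z)=2\Ib$ (equivalent to $\Cb_0(z)=\Ib$, i.e.\ interpolation) does convert, under the factorization, into the QMF relation $\Ab^H(z)\Ab(z)+\Ab^H(-z)\Ab(-z)=4\Ib$ on $|z|=1$. That computation is correct and is exactly the bridge between ``interpolatory'' and ``orthogonal'' that the theorem encodes.

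The gap is where you say it is, but it is larger than your phrasing suggests. First, the $L_2$-convergence of $S_\cA$ is not a formality that the QMF identity plus positive definiteness ``precisely'' delivers: you never state the spectral condition on the transition operator, never identify the invariant subspace on which it must be checked, and never show how strict positive definiteness of $\Cb$ on $\{|z|=1\}\setminus\{-1\}$ implies it. A QMF mask alone does not guarantee an $L_2$-convergent cascade even in the scalar case, so this step cannot be waved through; it is the entire analytic content of the theorem. Second, your orthonormality argument has an unstated dependency: identifying the autocorrelation $\Gb_k=\langle\Fb,\Fb(\cdot+k)\rangle$ with the cardinal values of the basic limit function of $S_\cC$ requires knowing that $S_\cC$ itself converges to a cardinal limit and that the refinable solution with the given normalization is unique --- neither of which is in your hypotheses as used. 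As a roadmap your proposal matches what one would expect the cited proof to do; as a proof it stops exactly at the point where the theorem ceases to be linear algebra.
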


As to the construction of such function, in \cite{ContiCotroneiSauer08}  a simple procedure is given
based on a modified \emph{spectral factorization} of
the \emph{parahermitian matrix} $2\Cb(z)$, which takes into account the presence of some of its zeros on the unitary circle.
\bigskip\par

We can now associate a matrix wavelet to any orthogonal MCMRA in the standard way.
 A matrix function $\Gb \in V_1$ is called a \emph{multichannel wavelet} for the orthogonal MCMRA
  if:
\begin{enumerate}
    \item $
    W_j := V_{j+1} \ominus V_j = \left\{ \Gb * c \, ( 2^{j} \cdot ) \;:\; c \in \ell^r_2 (\ZZ)
    \right\}, \qquad j \in \ZZ;
$
\item $\Gb $ is orthonormal.
\end{enumerate}
\par
As to the existence of multichannel wavelets,
we recall the following result:

\begin{theorem} \cite{BacchelliCotroneiSauer02a}
Suppose that the matrix scaling  function $\Fb \in L_2^{r\times
r}(\RR)$
  has orthonormal integer translates. Then there exists an
  orthonormal wavelet $\Gb \in L_2^{r\times r}(\RR)$ satisfying the two-scale relation
  \begin{equation}\label{eq:twoscalewavelet} \Gb = \sum_{j \in \ZZ} \Fb \left( 2 \cdot
  - j \right) \, \Bb_j\end{equation}
  for a suitable mask $\cB=(\Bb_j:\, j\in \ZZ)$.
 \end{theorem}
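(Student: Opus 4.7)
The plan is to work in the symbol/frequency domain, translate the orthonormality of $\Fb$ into a matrix quadrature mirror filter identity, and then construct the mask $\cB$ by completing a polyphase matrix to a unitary matrix on the torus.

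Let $\Ab(z)=\Ab_0(z^2)+z\Ab_1(z^2)$ be the symbol of the refinement mask of $\Fb$. Taking the Fourier transform of (\ref{eq:RE}) gives $\widehat{\Fb}(2\omega)=\frac{1}{2}\widehat{\Fb}(\omega)\Ab(e^{-i\omega})$, while Parseval's identity turns the orthonormality of the integer translates of $\Fb$ into the periodisation identity $\sum_{\ell\in\ZZ}\widehat{\Fb}^H(\omega+2\pi\ell)\widehat{\Fb}(\omega+2\pi\ell)=\Ib$ a.e. Substituting the refinement relation at frequency $2\omega$ and separating even and odd shifts yields the matrix QMF identity
\begin{equation}\label{eq:QMFplan}
\Ab_0(z)^H\Ab_0(z)+\Ab_1(z)^H\Ab_1(z)=2\Ib,\qquad |z|=1,
\end{equation}
so the $2r\times r$ matrix with blocks $\frac{1}{\sqrt2}\Ab_0(z)$ and $\frac{1}{\sqrt2}\Ab_1(z)$ has orthonormal columns pointwise on the torus.

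The central step is a matrix completion: extend the above to a square unitary $2r\times 2r$ matrix
$$\Mb(z):=\frac{1}{\sqrt2}\left(\begin{array}{cc}\Ab_0(z)&\Bb_0(z)\\ \Ab_1(z)&\Bb_1(z)\end{array}\right),\qquad |z|=1.$$
Since the first column block is already orthonormal, such a unitary completion exists at every $z\in\TT$ and can be chosen measurably (for instance by measurably picking an orthonormal basis of the orthogonal complement of the range of the first block). This produces $\Bb_0,\Bb_1\in L^\infty(\TT)$ and hence a matrix sequence $\cB=(\Bb_j)$ with symbol $\Bb(z)=\Bb_0(z^2)+z\Bb_1(z^2)$. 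Set
$$\Gb:=\sum_{j\in\ZZ}\Fb(2\cdot-j)\Bb_j,\qquad\mbox{equivalently }\widehat{\Gb}(2\omega)=\frac{1}{2}\widehat{\Fb}(\omega)\Bb(e^{-i\omega}).$$
Repeating the periodisation computation that led to (\ref{eq:QMFplan}), now for the pair $(\Ab,\Bb)$, the diagonal and off-diagonal blocks of $\Mb(z)^H\Mb(z)=\Ib$ yield respectively
$$\sum_{\ell}\widehat{\Gb}^H(\omega+2\pi\ell)\widehat{\Gb}(\omega+2\pi\ell)=\Ib\quad\mbox{and}\quad\sum_{\ell}\widehat{\Gb}^H(\omega+2\pi\ell)\widehat{\Fb}(\omega+2\pi\ell)=\BZero,$$
which translate into orthonormality of the integer translates of $\Gb$ and into $\langle\Gb,\Fb(\cdot-k)\rangle=\BZero$ for every $k\in\ZZ$; hence the shift-invariant space $W:=\overline{\mathrm{span}\{\Gb(\cdot-k):k\in\ZZ\}}$ is orthogonal to $V_0$. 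For the reverse inclusion, any $\hb=\sum_j\Fb(2\cdot-j)\db_j\in V_1$ can be written as $\Fb*c+\Gb*e$: splitting $\db$ into its even- and odd-indexed subsequences, the identification becomes a $2r\times 2r$ linear system at each $z\in\TT$ whose matrix is $\sqrt2\,\Mb(z)$, and the unitarity of $\Mb$ makes it uniquely solvable with $c,e\in\ell_2^r(\ZZ)$. Thus $V_1=V_0\oplus W$, and $\Gb$ is a multichannel wavelet satisfying (\ref{eq:twoscalewavelet}).

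The main obstacle is the matrix completion step. A pointwise/measurable completion is enough for the existence statement and produces $\Gb\in L_2^{r\times r}(\RR)$, but it does \emph{not} in general give a finitely supported mask $\cB$, i.e.\ a compactly supported wavelet. Obtaining a polynomial paraunitary completion is significantly harder and is precisely what the remainder of the paper addresses via the spectral factorisation of $2\Cb(z)$ and generalised matrix Bezout identities.
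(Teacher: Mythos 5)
Your argument is correct as an existence proof, but be aware that the paper itself does not prove this theorem: it is quoted from \cite{BacchelliCotroneiSauer02a}, and what the paper actually supplies (Section 5) is the \emph{constructive} counterpart. Your route --- periodising the orthonormality relation to obtain $\Ab_0^\sharp(z)\Ab_0(z)+\Ab_1^\sharp(z)\Ab_1(z)=2\Ib$, completing the resulting $2r\times r$ isometry to a unitary matrix on $\TT$, and reading off from the blocks of $\Mb(z)^H\Mb(z)=\Ib$ the orthonormality of $\Gb$, the orthogonality of $W$ to $V_0$, and $V_1=V_0\oplus W$ via the invertibility of the polyphase matrix --- is exactly the polyphase mechanism the paper encodes in its matrix $\Vb(z)$ satisfying $\Vb^\sharp(z)\Vb(z)=2\Ib$ and in the equations (\ref{QMFbis}). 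The genuine divergence is the completion step: you complete pointwise/measurably, which gives $\Bb_0,\Bb_1\in L^\infty(\TT)$ and hence only a square-summable mask, whereas the paper (following \cite{MicchelliSauer97a}) completes within the ring of Laurent polynomials via Bezout identities (Theorem~\ref{Teo:Completation(M=1N>=2)}), the correction $\Db_i=-\Rb^\sharp\Ab_i^\sharp+\Cb_i$, and a spectral factorisation $\Db=\Kb\Kb^\sharp$; that is what yields a finitely supported $\cB$ and a compactly supported wavelet. You flag this limitation yourself, and since the statement only asks for ``a suitable mask'', your soft completion does prove the theorem as written. Two details worth tightening: the measurable selection of the completing block deserves a sentence of justification (in fact a continuous completion exists, since the orthogonal complement of the range of the first block is a rank-$r$ continuous subbundle of $\TT\times\CC^{2r}$ and every complex vector bundle over the circle is trivial), and the $L_2$-convergence of $\sum_{j}\Fb(2\cdot-j)\Bb_j$ for a merely $\ell_2$ mask should be noted explicitly --- it follows from the orthogonality (stability) of the translates $\Fb(2\cdot-j)$.
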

\par
Observe that, as in the scalar case, due to the full
rank properties of $\Fb$, the symbol  of the multichannel wavelet
$\Gb$ must possess at least one factor $(z-1)$. This is equivalent
to say that the multichannel wavelet has at least one vanishing
moment.
\par
  It is easy to show that the symbols of an  orthonormal matrix function  $\Fb $ and of the corresponding multichannel
  wavelet $\Gb$ satisfy the
  \emph{orthogonality} (\emph{quadrature mirror filter (QMF)}) \emph{conditions}
 \begin{eqnarray*}
&&\Ab^\sharp (z)\Ab(z)+\Ab^\sharp (-z)\Ab(-z)=4\Ib, \quad |z|=1,\\
&&\Ab^\sharp (z)\Bb(z)+\Ab^\sharp (-z)\Bb(-z)=\BZero, \quad \ |z|=1,\\
&&\Bb^\sharp (z)\Bb(z)+\Bb^\sharp (-z)\Bb(-z)=4\Ib, \quad |z|=1,
 \end{eqnarray*}
where $\Ab^\sharp(z):= \Ab^T(z^{-1})$ which means $\Ab^\sharp(z):= \Ab^H(z)$ whenever
 $|z|=1$.
 \par
The QMF can be written in a concise form by the condition
$\Ub^\sharp(z)\Ub(z)=\Ib$  on the block matrix
$$\Ub(z):=\frac12\left(
                                                     \begin{array}{cc}
                                                       \Ab(z) & \Bb(z) \\
                                                       \Ab(-z) & \Bb(-z)
                                                     \end{array}
                              \right),\ \hbox{where} \ \
\Ub^\sharp(z):=\frac12\left(
                                                     \begin{array}{cc}
                                                       \Ab^\sharp(z) & \Ab^\sharp(-z) \\
                                                       \Bb^\sharp(z) & \Bb^\sharp(-z)
                                                     \end{array}
                                                   \right),$$
(which is the condition of being an unitary matrix for
$|z|=1$).
\par
Note that, given an orthogonal symbol $\Ab(z)$, the
alternating flip trick, used to construct the wavelet symbol in the scalar case, does not work in
this context. Indeed, the symbol $\Bb(z)=z\Ab^\sharp (-z)$ does
not verify the QMF equations unless $\Ab(z)$ and $\Ab(-z)$
commute.
\par
On the other hand, the orthogonality  conditions can be written in
terms of the subsymbols $\Ab_0(z),\ \Ab_1(z)$ and $\Bb_0(z),\
\Bb_1 (z)$. In fact, from (\ref{subsymbols}) we see that the
matrix $\Vb(z)$ 
$$\Vb(z):=\left(
                                                     \begin{array}{cc}
                                                       \Ab_0(z^2) & \Bb_0(z^2) \\
                                                       \Ab_1(z^2) & \Bb_1(z^2)
                                                     \end{array}
                              \right)=\left(
                                                     \begin{array}{cc}
                                                       1 & 1 \\
                                                       \frac1{z} & -\frac1{z}
                                                     \end{array}
                              \right)\Ub(z)$$
 is such that $\Vb^\sharp(z)\Vb(z)=2 \Ib$. Thus,
 the QMF equations take the equivalent form
\begin{equation}\label{QMFbis}
 \begin{array}{ccc}
&&\Ab_0^\sharp (z^2)\Ab_0(z^2)+\Ab_1^\sharp (z^2)\Ab_1(z^2)=2\Ib, \quad |z|=1, \\
&&\Ab_0^\sharp (z^2)\Bb_0(z^2)+\Ab_1^\sharp (z^2)\Bb_1(z^2)=\BZero, \quad \ |z|=1,\\
&&\Bb_0^\sharp (z^2)\Bb_0(z^2)+\Bb_1^\sharp (z^2)\Bb_1(z^2)=2\Ib, \quad
|z|=1.
 \end{array}
 \end{equation}
\par
Suppose now that we are given an orthogonal MCMRA  generated by some
orthonormal matrix  scaling function $\Fb$, with $\Gb$ as the
corresponding multichannel wavelet. Let us  consider any
vector-valued function $\hb\in L_2^r(\RR)$. From the nesting
properties of the spaces $\{V_j\}$ and $\{W_j\}$, it follows that
the approximation  $P_\ell\hb$ of $\hb$ in the space
$V_\ell$, $\ell\in \ZZ$, can be found in terms of the following
\emph{multichannel wavelet decomposition}:
$$P_\ell\hb=P_{\ell-L}\hb+Q_{\ell-1}\hb+Q_{\ell-2}\hb+\ldots+Q_{\ell-L}\hb,$$
where $L>0$ and $P_{\ell-L}\hb$, $ Q_{\ell-j}\hb$, $
j=1,\ldots,L,$ represent the orthogonal projections of $\hb$ to
the spaces $V_{\ell-L}, W_{\ell-j}, j=1,\ldots,L,$  respectively.
Analogously to the scalar case, we can derive a fast algorithm
which allows to compute all the projections by means of a
recursive scheme.
In fact we have that, if $P_j\hb\in V_j$ then $$P_j\hb=\sum_{k\in
\ZZ} \Fb(2^j\cdot -k)\cb_k^{(j)}$$ where
$$\cb_k^{(j)}=\langle\Fb(2^j\cdot -k),\hb\rangle=\int \Fb^H(2^jx -k)\hb(x)dx.$$
To compute the vector coefficient sequence $c^{(j-1)}\in \ell^r_2(\ZZ)$
connected to the representation of $\hb$ in the space $V_{j-1}$,
we make use of the refinement equation and get 
\begin{eqnarray*}
\cb_k^{(j-1)}&=& \langle\Fb(2^{j-1}\cdot -k),\hb\rangle=\langle\sum_{n\in\ZZ}\Fb(2(2^{j-1}\cdot -k)-n)\Ab_n,\hb\rangle\\
&=& \sum_{n\in\ZZ} \Ab_{n-2k}^T \langle\Fb(2^{j}\cdot -n),\hb\rangle=\sum_{n\in\ZZ}
\Ab_{n-2k}^T \cb_n^j,\quad k\in \ZZ.
\end{eqnarray*}
In the same way, using (\ref{eq:twoscalewavelet}), the wavelet vector coefficients
sequence $d^{(j-1)}\in \ell^r_2(\ZZ)$ connected to the
representation of $\hb$ in the space $W_{j-1}$ is obtained as
\begin{eqnarray*}
\db_k^{(j-1)}&=& \langle\Gb(2^{j-1}\cdot -k),\hb\rangle=\sum_{n\in\ZZ} \Bb_{n-2k}^T \cb_n^j,\quad k\in \ZZ.
\end{eqnarray*}
In summary, assuming $\ell=0$, that is $V_0$ as the initial space
of our representation, the \emph{vector decomposition formula} up
to the level $L>0$ reads as
\begin{eqnarray*}
\cb_k^{(j-1)}=\sum_{n\in\ZZ} \Ab_{n-2k}^T \cb_n^j, \quad \db_k^{(j-1)}=
\sum_{n\in\ZZ} \Bb_{n-2k}^T \cb_n^j,\quad k\in \ZZ,\, j=0,\ldots,L.
\end{eqnarray*}
Conversely, given the projections $P_{j}\hb$ and $Q_{j}\hb$, the
vector coefficient sequence $c^{(j+1)}$ connected to the
representation of $\hb$ in the space $V_{j+1}=V_{j}\oplus W_{j}$
is obtained by considering that
\begin{equation}\label{eq:rec1}
P_{j+1}\hb=\sum_{k\in\ZZ} \Fb(2^{j+1}\cdot -k)\cb_k^{(j+1)}\end{equation}
 and, on the other hand,
$$P_{j+1}\hb=P_{j}\hb+Q_{j}\hb=\sum_{n\in\ZZ} \Fb(2^{j}\cdot -n)\cb_n^{(j)}+\sum_{n\in\ZZ} \Gb(2^{j}\cdot -n)\db_n^{(j)}.$$
By invoking again the refinement equation on the right-hand side of the previous expression,
we have that
\begin{equation}\label{eq:rec2}
P_{j+1}\hb=\sum_{n\in\ZZ}\sum_{k\in\ZZ}\Fb(2^{j+1}-k)\Ab_{k-2n}\cb_n^{(j)}+\sum_{n\in\ZZ}\sum_{k\in\ZZ}\Gb(2^{j+1}-n)\Bb_{k-2n}\db_n^{(j)}.\end{equation}
A comparison between the two expressions (\ref{eq:rec1}) and (\ref{eq:rec2}) gives the
\emph{vector reconstruction formula}:
$$\cb_k^{(j+1)}=\sum_{n\in \ZZ}\left(\Ab_{k-2n}\cb_n^{(j)}+\Bb_{k-2n}\db_n^{(j)}\right),\quad k\in \ZZ, j=-L,\ldots,0.$$
For similar matrix wavelet decomposition and recontruction schemes see \cite{XiaSuter}.

%%%%%%%%%%%%%%%%%%%%%%%%%%%%%%%%%%%%%%%%%%%%%%%%%%%%%%%%%%%
\section{Multichannel wavelet construction}
The equation $\Ab_0^\sharp (z)\Ab_0(z)+\Ab_1^\sharp
(z)\Ab_1(z)=2\Ib$ is the starting point of the procedure that we
propose to construct the symbol $\Bb(z)$ of the multichannel
wavelet. More in detail, the procedure derives the matrix Laurent
polynomials $\Bb_0(z)$ and $\Bb_1(z)$, and thus
$\Bb(z)=\Bb_0(z^2)+z\Bb_1(z^2)$.
\par
 In the simple situation where $\Ab_0(z)$ and
$\Ab_1(z)$ are diagonal symbols, the multichannel wavelet
subsymbols $\Bb_0(z)$ and $\Bb_1(z)$  are constructed by the
repeated application of a scalar procedure: for each couple
$(a_0(z))_{ii},(a_1(z))_{ii}$, $i=1,\cdots, r,$ the two Laurent
polynomials $(b_0(z))_{ii},(b_1(z))_{ii}$ solution of the Bezout
identity are derived (see Lemma \ref{Lemma:nocommonzero}).
\par
In the general situation, the strategy is based on the existence
of two matrix symbols $\Db_0(z)$, $\Db_1(z)$ such that the positive definite Hermitian matrix
\begin{equation}\label{def:G}
\Gb(z)=:\left(%
\begin{array}{cc}
  \Ab_0(z) & \Db^\sharp_0(z) \\
  \Ab_1(z) & \Db^\sharp_1(z)
\end{array}%
\right)
\end{equation}
satisfies
\begin{eqnarray}\label{prop:GG}
\det \Gb(z)=1,\quad \quad \Gb^\sharp(z)\Gb(z)
=\left( \begin{array}{cc}
  2\Ib &  \BZero \\
  \BZero&  \Db_0(z)\Db_0^\sharp(z)+\Db_1(z)\Db_1^\sharp(z)
\end{array}%
\right),
\end{eqnarray}
as proved in the following proposition.
\begin{proposition} Let $\Ab_0(z)$ and $\Ab_1(z)$ be such that $\Ab_0^\sharp (z)\Ab_0(z)+\Ab_1^\sharp
(z)\Ab_1(z)=2\Ib$. Suppose that there exist two matrix symbols $\Db_0(z)$, $\Db_1(z)$  such that the positive definite Hermitian matrix
$\Gb(z)$ in (\ref{def:G}) has the properties (\ref{prop:GG}). 
Let $\Kb(z)$ be the spectral
factor of $\Db(z):=2\left(\Db_0(z)\Db_0^\sharp(z)+\Db_1(z)\Db_1^\sharp(z)\right)$.
Then  the  matrix symbols 
\begin{equation}\label{ris}
\Bb_0(z)=2\Db_0^\sharp(z)\left(\Kb^\sharp(z)\right)^{-1},\quad
\Bb_1(z)=2\Db_1^\sharp(z)\left(\Kb^\sharp(z)\right)^{-1},
\end{equation}
satisfy the two last
equations in (\ref{QMFbis}).
\end{proposition}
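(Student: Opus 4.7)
The plan is to verify the two equations in (\ref{QMFbis}) directly, by substituting the formulas (\ref{ris}) for $\Bb_0$ and $\Bb_1$, using as key ingredients the block structure of $\Gb^\sharp(z)\Gb(z)$ given by (\ref{prop:GG}) and the spectral factorization of $\Db(z)$.

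First I would write out $\Gb^\sharp(z)\Gb(z)$ as a $2\times 2$ block product, obtaining four identities. The $(1,1)$ block equals $\Ab_0^\sharp\Ab_0+\Ab_1^\sharp\Ab_1$, the two off-diagonal blocks equal $\Ab_0^\sharp\Db_0^\sharp+\Ab_1^\sharp\Db_1^\sharp$ and $\Db_0\Ab_0+\Db_1\Ab_1$, and the $(2,2)$ block equals $\Db_0\Db_0^\sharp+\Db_1\Db_1^\sharp=\tfrac{1}{2}\Db(z)$ by definition of $\Db$. Comparing with (\ref{prop:GG}), the $(1,1)$ block recovers the hypothesis $\Ab_0^\sharp\Ab_0+\Ab_1^\sharp\Ab_1=2\Ib$, while both off-diagonal blocks vanish; the latter vanishing is the crucial algebraic input.

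For the orthogonality equation, substitute (\ref{ris}) and factor out $(\Kb^\sharp(z))^{-1}$ on the right, obtaining
\[
\Ab_0^\sharp\Bb_0+\Ab_1^\sharp\Bb_1 \;=\; 2\bigl(\Ab_0^\sharp\Db_0^\sharp+\Ab_1^\sharp\Db_1^\sharp\bigr)(\Kb^\sharp)^{-1} \;=\; \BZero,
\]
by the off-diagonal identity just established. For the unitarity equation, first compute $\Bb_i^\sharp(z)=2\Kb^{-1}(z)\Db_i(z)$ using the rules $(XY)^\sharp=Y^\sharp X^\sharp$ and $(X^{-1})^\sharp=(X^\sharp)^{-1}$. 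Multiplying and using the $(2,2)$ block identity yields
\[
\Bb_0^\sharp\Bb_0+\Bb_1^\sharp\Bb_1 \;=\; 4\Kb^{-1}\bigl(\Db_0\Db_0^\sharp+\Db_1\Db_1^\sharp\bigr)(\Kb^\sharp)^{-1} \;=\; 2\Kb^{-1}\Db(z)(\Kb^\sharp)^{-1},
\]
and the spectral factorization $\Db(z)=\Kb(z)\Kb^\sharp(z)$ then collapses the right-hand side to $2\Ib$.

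The main delicate point is the spectral factorization convention: the manipulation above succeeds precisely when $\Db$ factors as $\Kb\Kb^\sharp$ with $\Kb$ on the left, so I would fix this convention at the outset and justify that $\Kb^\sharp$ is pointwise invertible on $|z|=1$. The latter follows from $\Db(z)$ being strictly positive definite on the unit circle, which in turn is a consequence of $\Gb^\sharp(z)\Gb(z)$ being Hermitian positive definite (via $\det\Gb(z)=1$) together with the Schur complement identification $\tfrac12\Db(z)=(\Gb^\sharp\Gb)_{22}-(\Gb^\sharp\Gb)_{21}(\Gb^\sharp\Gb)_{11}^{-1}(\Gb^\sharp\Gb)_{12}$, which here simplifies since the off-diagonal blocks vanish.
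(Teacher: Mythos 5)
Your proposal is correct and follows essentially the same route as the paper's own proof: both extract the off-diagonal identity $\Ab_0^\sharp(z)\Db_0^\sharp(z)+\Ab_1^\sharp(z)\Db_1^\sharp(z)=\BZero$ from (\ref{prop:GG}) to get the orthogonality equation after substituting (\ref{ris}), and both reduce the unitarity equation to $\Kb^{-1}(z)\Db(z)\left(\Kb^\sharp(z)\right)^{-1}=\Ib$ via the factorization $\Db(z)=\Kb(z)\Kb^\sharp(z)$. Your additional remarks on the invertibility of $\Kb^\sharp(z)$ on $|z|=1$ and the explicit block expansion of $\Gb^\sharp(z)\Gb(z)$ merely spell out details the paper leaves implicit.
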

\begin{proof}
Since
$\Db(z)$ is
positive definite with determinant equal to $1$, its spectral
factor $\Kb(z)$ exists and satisfies
$\Db(z)=\Kb(z)\Kb^\sharp(z)$. 
Therefore, if we substitute (\ref{ris}) into
$$\Ab^\sharp_0(z)\Db^\sharp_0(z)+\Ab^\sharp_1(z)\Db^\sharp_1(z)=\BZero$$
we get 
$$
\Ab_0^\sharp (z^2)\Bb_0(z^2)+\Ab_1^\sharp (z^2)\Bb_1(z^2)=\BZero, \quad \ |z|=1,$$
which is the second equation in (\ref{QMFbis}). Finally, since 
$$\left(\Kb(z)\right)^{-1}\Db(z)\left(\Kb^\sharp(z)\right)^{-1}=\Ib,$$
by substituting (\ref{ris}) into it we end up with
$$
\Bb_0^\sharp (z^2)\Bb_0(z^2)+\Bb_1^\sharp (z^2)\Bb_1(z^2)=2\Ib, \quad
|z|=1$$
which is the  last equation in (\ref{QMFbis}). \end{proof}

For the  proof of the existence of the Hermitian matrix $\Gb(z)$ we
refer the reader to \cite{MicchelliSauer97a}. Its
actual
construction can be carried out as explained in the following subsections.

\subsection{Some matrix completion results}
We start by recalling a result
about Bezout identities

\begin{theorem}\label{Teo:Bezoutidentity}  \cite{LawMic00}
For any pair of Laurent polynomials $a_1(z)$ $a_2(z)$ the Bezout
identity
\begin{equation}\label{Bezoutidentity}
a_1(z)b_1(z)+a_2(z)b_2(z)=1
\end{equation}
is satisfied by a pair $b_1(z)$ $b_2(z)$ of Laurent polynomials if
and only if $a_1(z)$ and $a_2(z)$ have no common zeros. Moreover,
given one particular pair of solutions $b^*_1(z)$, $b^*_2(z)$ the
set of all solutions of (\ref{Bezoutidentity}) is of the form
$$
b_1(z)=b^*_1(z)+p(z)a_2(z),\quad b_2(z)=b^*_2(z)-p(z)a_1(z)
$$
where $p(z)$ is any Laurent polynomial.
\end{theorem}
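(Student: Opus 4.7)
The plan is to reduce the Laurent-polynomial Bezout statement to its classical counterpart in $\CC[z]$, exploiting the fact that $\CC[z,z^{-1}]$ is the localisation of the PID $\CC[z]$ at $\{z^n\}_{n\ge 0}$ and is therefore itself a PID whose units are precisely the monomials $c\,z^k$ with $c\in\CC^*$, $k\in\ZZ$. Necessity is immediate: a common zero $z_0\in\CC\setminus\{0\}$ of $a_1,a_2$ would, upon evaluation of (\ref{Bezoutidentity}) at $z_0$, force $0=1$.

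For sufficiency, I first clear denominators by writing $a_i(z)=z^{-n_i}p_i(z)$ with $p_i\in\CC[z]$, so that the common zeros of $a_1, a_2$ in $\CC\setminus\{0\}$ coincide with those of $p_1, p_2$. The hypothesis then forces $\gcd(p_1,p_2)$ in $\CC[z]$ to be a pure power $z^k$, and the extended Euclidean algorithm in $\CC[z]$ produces polynomials $q_1, q_2$ with $q_1(z)\,p_1(z)+q_2(z)\,p_2(z)=z^k$. Dividing through by $z^k$ (a unit in $\CC[z,z^{-1}]$) and absorbing the remaining factors $z^{n_i-k}$ into the $q_i$ produces Laurent polynomials $b_1, b_2$ that solve (\ref{Bezoutidentity}).

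For the parametrisation of the full solution set, I subtract two solutions $(b_1,b_2)$ and $(b_1^*,b_2^*)$ to obtain
$$a_1(z)\bigl(b_1(z)-b_1^*(z)\bigr)=-\,a_2(z)\bigl(b_2(z)-b_2^*(z)\bigr).$$
Because $a_1$ and $a_2$ are coprime in the UFD $\CC[z,z^{-1}]$, $a_1$ must divide $b_2-b_2^*$; writing $b_2-b_2^*=-p\,a_1$ for some Laurent polynomial $p$ and cancelling $a_1$ then yields $b_1-b_1^*=p\,a_2$, as claimed. The only real subtlety is keeping track of the monomial factors $z^{n_i}$ while transferring divisibility statements between $\CC[z]$ and $\CC[z,z^{-1}]$; no algebraic input beyond the classical Bezout lemma and the observation that $z=0$ is invisible to the Laurent notion of ``zero'' is required.
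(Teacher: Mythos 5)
The paper itself offers no proof of this theorem: it is imported verbatim from the cited reference \cite{LawMic00} (where it arises as a special case of Bezout identities with inequality constraints), so there is no in-paper argument to compare yours against. On its own merits, your proof is correct and is the standard reduction to the classical polynomial Bezout lemma: necessity by evaluation at a common zero in $\CC\setminus\{0\}$; sufficiency by writing $a_i(z)=z^{-n_i}p_i(z)$ with $p_i\in\CC[z]$, noting that the coprimality hypothesis forces $\gcd(p_1,p_2)$ to be a monomial $cz^k$ (a unit in $\CC[z,z^{-1}]$), running the extended Euclidean algorithm, and dividing by that unit; and the parametrisation of all solutions by the usual coprime-divisibility argument in the UFD $\CC[z,z^{-1}]$. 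Two small points you should make explicit if you write this out in full: choose the $n_i$ so that $p_i(0)\neq 0$, which is what guarantees that the only possible common irreducible factors of $p_1,p_2$ are powers of $z$ and makes the ``$\gcd$ is a pure power of $z$'' claim immediate; and dispose of the degenerate case where one $a_i$ vanishes identically (then ``no common zeros'' forces the other to be a unit, i.e.\ a monomial, and both directions of the statement are checked directly). Neither affects the substance; your argument buys a short, self-contained proof of a fact the paper only cites.
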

\par
As to the common zeros of the subsymbols, the
following result holds true.
\begin{lemma}\label{Lemma:nocommonzero}
The subsymbols associated with the diagonal entries of an
orthogonal symbol $\Ab(z)$, that is the subsymbols
$(a_0(z))_{ii},\ (a_1(z))_{ii},\ i=1,\cdots,r,$ have no common
zeros.
\end{lemma}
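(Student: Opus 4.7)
The plan is to extract from the matrix QMF identity
$\Ab_0^\sharp(z)\Ab_0(z)+\Ab_1^\sharp(z)\Ab_1(z)=2\Ib$
(the first line of (\ref{QMFbis})) a scalar QMF-like equation holding at each diagonal position, and then close with a one-line contradiction argument. The key point is that the lemma is stated with the ``simple situation'' of the preceding paragraph in mind, i.e.\ with $\Ab_0(z)$ and $\Ab_1(z)$ diagonal, so that the $(i,i)$ entry of $\Ab_0^\sharp(z)\Ab_0(z)$ is exactly $(a_0(z^{-1}))_{ii}(a_0(z))_{ii}$, with no off-diagonal contribution, and similarly for $\Ab_1$.

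Concretely, I would first read off the $(i,i)$ entry of the matrix orthogonality condition, obtaining
\[
(a_0(z^{-1}))_{ii}\,(a_0(z))_{ii}+(a_1(z^{-1}))_{ii}\,(a_1(z))_{ii}=2 .
\]
Although this identity was originally derived as a QMF relation on $|z|=1$, both sides are scalar Laurent polynomials, so they must coincide for all $z\in\CC\setminus\{0\}$. I would then argue by contradiction: assume some $z_0\in\CC\setminus\{0\}$ is a common zero of the Laurent polynomials $(a_0(z))_{ii}$ and $(a_1(z))_{ii}$. Substituting $z=z_0$ in the displayed identity makes both products on the left-hand side vanish, forcing $0=2$, which is absurd. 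Hence $(a_0(z))_{ii}$ and $(a_1(z))_{ii}$ share no zero, for each $i=1,\dots,r$, and the Bezout identity of Theorem~\ref{Teo:Bezoutidentity} can therefore be invoked entrywise.

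No technical difficulty is involved once the diagonal hypothesis is made explicit; the only delicate point is precisely this implicit hypothesis. For a genuinely non-diagonal orthogonal $\Ab(z)$ the $(i,i)$ entry of $\Ab_0^\sharp(z) \Ab_0(z)+\Ab_1^\sharp(z) \Ab_1(z)=2\Ib$ picks up the additional off-diagonal contributions $\sum_{k\neq i}(a_\varepsilon(z^{-1}))_{ki}(a_\varepsilon(z))_{ki}$, which in principle could absorb the cancellation at $z_0$ and invalidate the one-line argument; flagging this is where I would spend the proof's only bit of care.
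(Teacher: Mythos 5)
Your argument is essentially identical to the paper's proof: both read off the $(i,i)$ entry of the first equation in (\ref{QMFbis}) to obtain the scalar relation $(a_0(z^{-1}))_{ii}(a_0(z))_{ii}+(a_1(z^{-1}))_{ii}(a_1(z))_{ii}=2$ and conclude that a common zero would force $0=2$. Your explicit flagging of the implicit diagonal hypothesis (needed so that no off-diagonal terms contribute to the $(i,i)$ entry) is a fair and careful reading of what the paper leaves tacit in the phrase ``a relation satisfied by the diagonal symbols.''
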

\begin{proof} The result follows from the first equation in (\ref{QMFbis}). In
fact the existence of a common zero for $(a_0(z))_{ii},\
(a_1(z))_{ii},\ i=1,\cdots,r$ contradicts
$$
(a_0(z^{-1}))_{ii}(a_0(z))_{ii}+(a_1(z^{-1}))_{ii}(a_1(z))_{ii}=2,\quad
i=1,\dots,r,
$$
a relation  satisfied by the diagonal symbols.\qed \end{proof}
\par
We continue with a "completion" result.
\begin{theorem}\label{Teo:Completation(M=1N>=2)}
Let $a_i(z),\ i=1,\dots,n,$  Laurent polynomials with $n\ge 2$
with no common zeros. Then there exists a $n\times n$ matrix
Laurent polynomial $\Pb(z)$ whose first row is
$\left(a_1(z),\cdots,a_n(z)\right)$ such that
$$\det \Pb(z)=1,\quad z\in \CC\setminus\{0\}.$$
\end{theorem}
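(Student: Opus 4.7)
The plan is to induct on $n\ge 2$, using Theorem~\ref{Teo:Bezoutidentity} as both the base case and the engine of the inductive step. For the base $n=2$, since $a_1(z),a_2(z)$ have no common zeros, Theorem~\ref{Teo:Bezoutidentity} furnishes Laurent polynomials $b_1(z),b_2(z)$ with $a_1(z)b_1(z)+a_2(z)b_2(z)=1$, so I would take
$$
\Pb(z)=\begin{pmatrix} a_1(z) & a_2(z) \\ -b_2(z) & b_1(z) \end{pmatrix},
$$
whose determinant is identically $1$ on $\CC\setminus\{0\}$.

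For the inductive step, assume the result for $n-1$ and let $a_1,\dots,a_n$ be Laurent polynomials without common zeros. First I would extract a greatest common divisor $d(z)$ of $a_{n-1}(z)$ and $a_n(z)$ in $\CC[z,z^{-1}]$ (which is a principal ideal domain, being the localization of $\CC[z]$ at $z$), and write $a_{n-1}=d\alpha$, $a_n=d\beta$ with $\alpha(z),\beta(z)$ coprime. Any common zero of the shorter list $(a_1,\dots,a_{n-2},d)$ would automatically be a common zero of all $a_1,\dots,a_n$, so this list of $n-1$ Laurent polynomials still has no common zero. The inductive hypothesis then yields an $(n-1)\times(n-1)$ Laurent polynomial matrix $\Qb(z)$ with first row $(a_1,\dots,a_{n-2},d)$ and $\det\Qb(z)=1$.

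The last step is to combine $\Qb$ with a $2\times 2$ unimodular completion that turns the pair $(d,0)$ into $(a_{n-1},a_n)$. Applying Theorem~\ref{Teo:Bezoutidentity} to the coprime pair $\alpha,\beta$ gives Laurent polynomials $\alpha',\beta'$ with $\alpha\alpha'+\beta\beta'=1$. Set
$$
\Tb(z)=\begin{pmatrix} \alpha(z) & \beta(z) \\ -\beta'(z) & \alpha'(z) \end{pmatrix}, \qquad \tilde{\Qb}(z)=\begin{pmatrix} \Qb(z) & \BZero \\ \BZero & 1 \end{pmatrix}, \qquad \Sb(z)=\begin{pmatrix} \Ib_{n-2} & \BZero \\ \BZero & \Tb(z) \end{pmatrix},
$$
each of determinant $1$. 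Taking $\Pb(z):=\tilde{\Qb}(z)\,\Sb(z)$, the block product has determinant $1$, and its first row is $(a_1,\dots,a_{n-2})\cdot\Ib_{n-2}$ concatenated with $(d,0)\cdot\Tb(z)=(d\alpha,d\beta)=(a_{n-1},a_n)$, which is exactly $(a_1,\dots,a_n)$ as required.

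The main obstacle I expect is the bookkeeping around the Laurent polynomial gcd: one must verify that $\CC[z,z^{-1}]$ admits gcds pairwise, that the factorization $a_{n-1}=d\alpha$, $a_n=d\beta$ leaves $\alpha,\beta$ genuinely coprime (so that Bezout applies to them and yields $\Tb$), and that the reduced list $a_1,\dots,a_{n-2},d$ really inherits the no-common-zeros hypothesis so the inductive step applies. All three are routine but deserve a clean verification before the block matrix manipulations are carried out.
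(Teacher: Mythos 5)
Your proof is correct, and while it shares the paper's overall architecture (induction on $n$, with the Bezout identity of Theorem~\ref{Teo:Bezoutidentity} supplying both the $2\times 2$ base case and the unimodular $2\times 2$ completion), the inductive step is genuinely different from the paper's --- and in fact more careful. The paper reduces from $(a_1,\dots,a_n)$ to $(a_1,\dots,a_{n-1})$ by simply bordering the $(n-1)\times(n-1)$ completion $\bar\Pb(z)$ with the column $(a_n,0,\dots,0)^T$ and the row $(0,\dots,0,1)$; this keeps the determinant equal to $1$, but it silently applies the inductive hypothesis to $(a_1,\dots,a_{n-1})$, a list that need not inherit the no-common-zeros property (take $a_1=a_2=z-1$, $a_3=z-2$). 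Your reduction to $(a_1,\dots,a_{n-2},d)$ with $d=\gcd(a_{n-1},a_n)$ in the PID $\CC[z,z^{-1}]$ is exactly what is needed to make the induction hypothesis provably applicable, and the post-multiplication by $\mathrm{diag}(\Ib_{n-2},\Tb(z))$ correctly converts the first row $(a_1,\dots,a_{n-2},d,0)$ into $(a_1,\dots,a_n)$ while preserving unit determinant. What the paper's route buys is a shorter construction that translates directly into the recursive \texttt{Basic\_completion} procedure used later; what yours buys is a complete argument covering the cases the paper's bordering step misses. The three verifications you flag at the end (existence of gcds, coprimality of $\alpha,\beta$, inheritance of the no-common-zeros hypothesis) are indeed routine and all go through, so there is no gap on your side.
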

\begin{proof}
The proof is by induction. Let us start with $n=2$. For the
row vector $\left(a_1(z),a_2(z)\right)$ we construct the matrix
$\Pb(z)=\left(%
\begin{array}{cc}
  a_1(z) & a_2(z) \\
  -b_1(z) & b_2(z)
\end{array}%
\right)$ with $b_i(z),\ i=1,2$ being solutions of the Bezout
identity (\ref{Bezoutidentity}). Obviously, the matrix $\Pb(z)$ is
such that $\det \Pb(z)=1,\ z\in \CC\setminus\{0\}$. Next, assuming
that the theorem is true for $n-1$, we prove it for $n$. Thus,
given $\left(a_1(z),\cdots,a_n(z)\right)$ we first construct
${\bar \Pb}(z)$, $\det \bar \Pb(z)=1$, with the first row given by
$\left(a_1(z),\cdots,a_{n-1}(z)\right)$ then we construct the
block matrix
$$
\Pb(z):=\left(%
\begin{array}{ccccc}
   &  &  &  & a_n(z) \\
   &  &  &  & 0 \\
   & & {\bar P}(z) &  & \vdots \\
   &  &  &  & 0 \\
  0 & 0 & \cdots &0  & 1
\end{array}%
\right).
$$
This completes the proof. \qed \end{proof}

\par
With this result we are now able to describe a procedure for ''completing''
an $m\times n$ matrix Laurent polynomial with
$m\ge n$ and  $\mbox{rank}\, \Ab(z)=m$, by constructing an
$n\times n$ matrix Laurent
polynomial $\Pb(z)$, whose first $m$ rows agree with those of
$\Ab(z)$, such that
$$\det \Pb(z)=1,\quad z\in \CC\setminus\{0\}.$$

The existence of such polynomial has been formerly proved in \cite{MicchelliSauer97a}.
\par

\smallskip 

We start by discussing how to write a procedure providing, for a
given Laurent polynomial vector $\ab_{n}(z)$ of length $n\ge 2$, a
Laurent polynomial matrix $\Pb(z)$ having the first row given by
$\ab_n(z)=\left(a_1(z),\cdots,a_{n}(z)\right)$ and satisfying
 $\det \Pb(z)=1,\ z\in \CC\setminus \{0\}$.
 \par
 For $a_1(z),a_2(z)$, let
[$b_1(z),b_2(z)$]=\verb"Bezout"($a_1(z),a_2(z)$) be the procedure
in Matlab-like notation providing the two Laurent polynomials
$b_1(z),b_2(z)$ solution of the Bezout identity.
\par
Next, let [$
\Pb_{n\times n}(z)$]=\verb"Basic_completion"($\ab_{n}(z)$) be the following
(recursive) procedure that, taking the vector $\ab(z)$ as input,
produces the matrix $\Pb_{n\times n}(z)$ as output.

\medskip \noindent
$\bullet$ [$\Pb_{n\times
n}(z)$]=\verb"Basic_completion"($\ab_{n}(z)$)

\begin{enumerate}\item[]
If $n=2$ let
[$b_1(z),b_2(z)$]=\verb"Bezout"($a_1(z),a_2(z)$) two Laurent
polynomials solution of the Bezout identity. Set $\Pb_{2\times
2}(z):=\left(
                    \begin{array}{cc}
                      a_1(z) & a_2(z) \\
                      -b_1(z) & b_2(z)
                    \end{array}
                  \right)$\,; \\
else ($n> 2$) use [$\Pb_{(n-1)\times
 (n-1)}(z)$]=\verb"Basic_completion"($\ab_{n-1}(z)$).

  Then with $\eb_{n-1}:=(0,\cdots, 1)^T\in \RR^{n-1}$, define
$$\Pb_{n\times n}(z):=\left(
                    \begin{array}{cc}
                      \Pb_{(n-1)\times (n-1)}(z) & a_n(z) \\
                      \BZero_{1\times(n-1)} & \eb_{n-1}
                    \end{array}
                  \right).$$
\end{enumerate}

\bigskip \noindent We continue by describing the (recursive) procedure [$\Pb_{m\times
m}$]=\verb"Completion"($\Ab_{n\times m}$). This procedure, for a
given matrix Laurent polynomial $\Ab_{n\times m}(z)$ with $m>n$,
constructs the Laurent polynomial matrix $\Pb(z)$ whose $n$ first
rows
 agree with those of $\Ab_{n\times m}$ and
 such that $\det \Pb(z)=1,\ z\in \CC\setminus \{0\}$.

\medskip \noindent
$\bullet$ [$\Pb_{m\times m}(z)$]=\verb"Completion"($\Ab_{n\times
m}(z)$)

\begin{enumerate}

\item Take $\bar \Ab_{(n-1)\times m}(z)$ the sub-matrix of
$\Ab_{n\times m}(z)$ made of its first $(n-1)$ rows\,;

\item If $n=2$ construct  [$\bar \Pb_{m\times
 m}(z)$]=\verb"Basic_completion"($\bar \Ab_{1\times m}(z)$)

else ($n>2$) construct  [$\bar \Pb_{m\times
 m}(z)$]=\verb"Completion"($\bar \Ab_{(n-1)\times m}(z)$)\,;

\item Compute the matrix $\Cb_{n\times m}(z):=\Ab_{n\times
m}(z){\bar \Pb_{m\times
 m}}^{-1}(z)$ having block structure
 $$
\Cb_{n\times m}(z)=\left(%
\begin{array}{cc}
  \Ib_{(n-1)\times (n-1)} & \BZero_{(n-1)\times (r+1)}\\
  \cb_{1\times (n-1)}(z)& \db_{1\times (r+1)}(z)
\end{array}%
\right)\,$$ where $r:=m-n$;

\item Construct the $(r+1)\times (r+1)$ matrix \\
$[\Db_{(r+1)\times
(r+1)}(z)]$=\verb"Basic_completion"($\db_{1\times (r+1)}(z)$)\,;

\item Use $\Db_{(r+1)\times (r+1)}(z)$ to construct the $m\times
m$ block matrix
$$\Vb_{m\times m}(z):=\left(
                    \begin{array}{cc|cc}
                      \Ib_{(n-1)\times (n-1)}(z)& && \BZero_{(n-1)\times (r+1)}
                      \\\hline
                     \begin{array}{c}  \cb_{1\times (n-1)}(z) \\ \BZero_{r\times (n-1)} \end{array} &&&
\Db_{(r+1)\times(r+1)}(z)
                    \end{array}
                  \right)$$
\item Set $\Pb_{m\times m}(z):=\Vb_{m\times m}(z)\bar \Pb_{m\times
m}(z)$\,.
\end{enumerate}
\par
With the help of the two previous recursive procedures, we are able
to sketch the algorithm for constructing multichannel wavelets.

\subsection{The MCW construction algorithm}\label{sec:MCWconstr}

\begin{enumerate}
\item From $\Ab(z)\in \ell_0^{r\times r}(\ZZ)$ extract the
sub-symbols $\Ab_0(z)$ and $\Ab_1(z)$;

\item If $\Ab_0(z)$ and $\Ab_1(z)$ are diagonal symbols construct $\Bb_0(z)$ and
$\Bb_1(z)$ by the repeated application of the procedure

[$(b_0(z))_{ii},(b_1(z))_{ii}$]=\verb"Bezout"($(a_0(z))_{ii},(a_1(z))_{ii}$),
$i=1,\cdots, r$

Then go to step 10;

\item Otherwise construct the $r\times 2r$ block matrix ${\widetilde \Ab^\sharp}(z)=\left(%
\begin{array}{ccc}
  \Ab_0^\sharp(z)& |& \Ab_1^\sharp(z)
\end{array}%
\right)$;

\item Use [$\Lb(z)$]=\verb"Completion"($\widetilde
\Ab^\sharp(z)$) to construct the
matrix $$\Lb(z)=:\left(%
\begin{array}{cc}
  \Ab^\sharp_0(z) & \Ab^\sharp_1(z) \\
  \Cb_0(z) & \Cb_1(z)
\end{array}%
\right)$$ so that
$$\det \left(\begin{array}{cc}
  \Ab_0(z) & \Cb^\sharp_0(z) \\
  \Ab_1(z) & \Cb^\sharp_1(z)
\end{array}%
\right)=1;$$

\item Compute the $r\times r$ matrix $\Rb(z)=\frac 12
\left(\Ab_0^{\sharp}(z)\Cb^{\sharp}_0(z)+\Ab_1^{\sharp}(z)\Cb^{\sharp}_1(z)\right)$;

\item Compute the $r\times r$ matrices
$\Db_i(z)=-\Rb^{\sharp}(z)\Ab^\sharp_i(z)+\Cb_i(z),\quad i=0,1$;

\item Compute the $r\times r$ positive definite matrix
$$\Db(z)=2\left(\Db_0(z)\Db_0^{\sharp}(z)+\Db_1(z)\Db_1^{\sharp}(z)\right);$$

\item Compute  the spectral factorization of
$\Db(z)$ such that $$\Db(z)=\Kb(z)\Kb^{\sharp}(z),$$ for example using the algorithm described in \cite{JezekKucera85};

\item Set $\Eb(z)=\Kb^{-1}(z)$ and construct the $r\times r$
matrices $$\Bb_i(z)=2\Db_i^\sharp(z)\Eb^\sharp(z),\quad i=0,1;$$

 \item Construct the wavelet symbol
 $$\Bb(z)=\Bb_0(z^2)+z\Bb_1(z^2).$$

\end{enumerate}
\par
Two remarks are worth to be made:
\begin{itemize}
\item For a proof of the correctness of all steps we refer to
\cite[Section 6]{MicchelliSauer97a}. Nevertheless, since \cite{MicchelliSauer97a} deals with
rank-1 matrix functions, the above construction has to be considered as an extention 
to the full rank case;

\item The algorithm produces correct results
even in case of diagonal subsymbols $\Ab_0(z)$ and $\Ab_1(z)$.
Though, to end up with exactly diagonal subsymbols $\Bb_0(z)$ and
$\Bb_1(z)$ we may need to multiply the result of the MCW algorithm
by a suitable permutation matrix $\Jb$.
\end{itemize}

%%%%%%%%%%%%%%%%%%%%%%%%%%%%%%%%%%%%%%%%%%%%%%%%%%%%%%%%%%%%%%%%%%
\section{Numerical Examples}\label{sec:num}
The aim of this section is to consider two orthonormal full rank
refinable matrix functions with $r=2$ and $r=3$, respectively and
to use the MCW algorithm described in the subsection \ref{sec:MCWconstr} to construct the
corresponding multichannel wavelets. In particular, while for the
case $r=2$ the positive definite full rank symbol that we start with is
the one considered in \cite{ContiCotroneiSauer08}, the case $r=3$
is here derived from scratch.

\subsection{A two-channel example}\label{sec:numr2}
Let $\Cb(z)$ be the symbol of the interpolatory full rank vector subdivision scheme
first given in \cite{ContiCotroneiSauer08}
\begin{eqnarray*}
\Cb(z) = \left(
  \begin{array}{cc}
    c_{11}(z) & c_{12}(z) \\ c_{12}(z^{-1}) & c_{22}(z)
  \end{array} \right)
\end{eqnarray*}
with
\begin{eqnarray*}
c_{11}(z)&=&\frac 12\,{\frac { \left( z+1 \right) ^{2}}{z}}\qquad  \mbox{(linear B-spline symbol)},\\
c_{22}(z)&=&-\frac1{16}\frac{(z^2-4z+1)(z+1)^4}{z^3}\qquad   \mbox{(4-point scheme symbol)},\\
c_{12}(z)&=&\lambda\,z\,(z^2-1)^3,
\end{eqnarray*}
satisfying $\Cb(1)=2\Ib,\ \Cb(z)+\Cb(-z)=2\Ib$.
 In order to assure positive definiteness, the parameter $\lambda$ is taken in the interval
$(-\frac 1{32}\sqrt{3},\frac 1{32}\sqrt{3})$.
Fixing, for example, $\lambda=\frac 1{20}$, the symbol $\Ab(z)$ of the associated  full rank orthonormal
refinable function constructed with the algorithm given in \cite{ContiCotroneiSauer08}
has elements
{\small
\begin{eqnarray*}
  a_{11}(z)&=&  1.081604742 + 0.7776021479\,z + {\displaystyle
    0.2165957837z^{-1}}- {\displaystyle 0.08257171989z^{-2}}\\&&
  + {\displaystyle 0.004835070286z^{-3}}
  +
  {\displaystyle 0.0009670018466z^{-4}} + {\displaystyle
    0.0009670219776z^{-5}}, \\
  a_{12}(z)&=& - 0.001208777472 + 0.001208777472\,z^{2} + 0.001208777472\,z -
  {\displaystyle 0.001208777472z^{-1}}, \\
  a_{21}(z)&=&   - 0.02117594497 - 0.005136018632\,z - {\displaystyle
    0.07331687185z^{-1}}  + {\displaystyle 0.2747671662z^{-2}
  }\\&& - {\displaystyle 0.0679559177z^{-3}}  -
  {\displaystyle 0.2535912212z^{-4}} + {\displaystyle
    0.1464088082z^{-5}},\\
  a_{22}(z)&=& 0.3169890016 + 0.6830110222\,z^{2} + 1.183011034\,z
  -  {\displaystyle 0.1830110103z^{-1}}.
\end{eqnarray*}
}
The  plot of the corresponding matrix scaling function is given in Fig. \ref{fig:G2x2}

\begin{figure}[!t]
\begin{center}
\includegraphics[width=10cm]{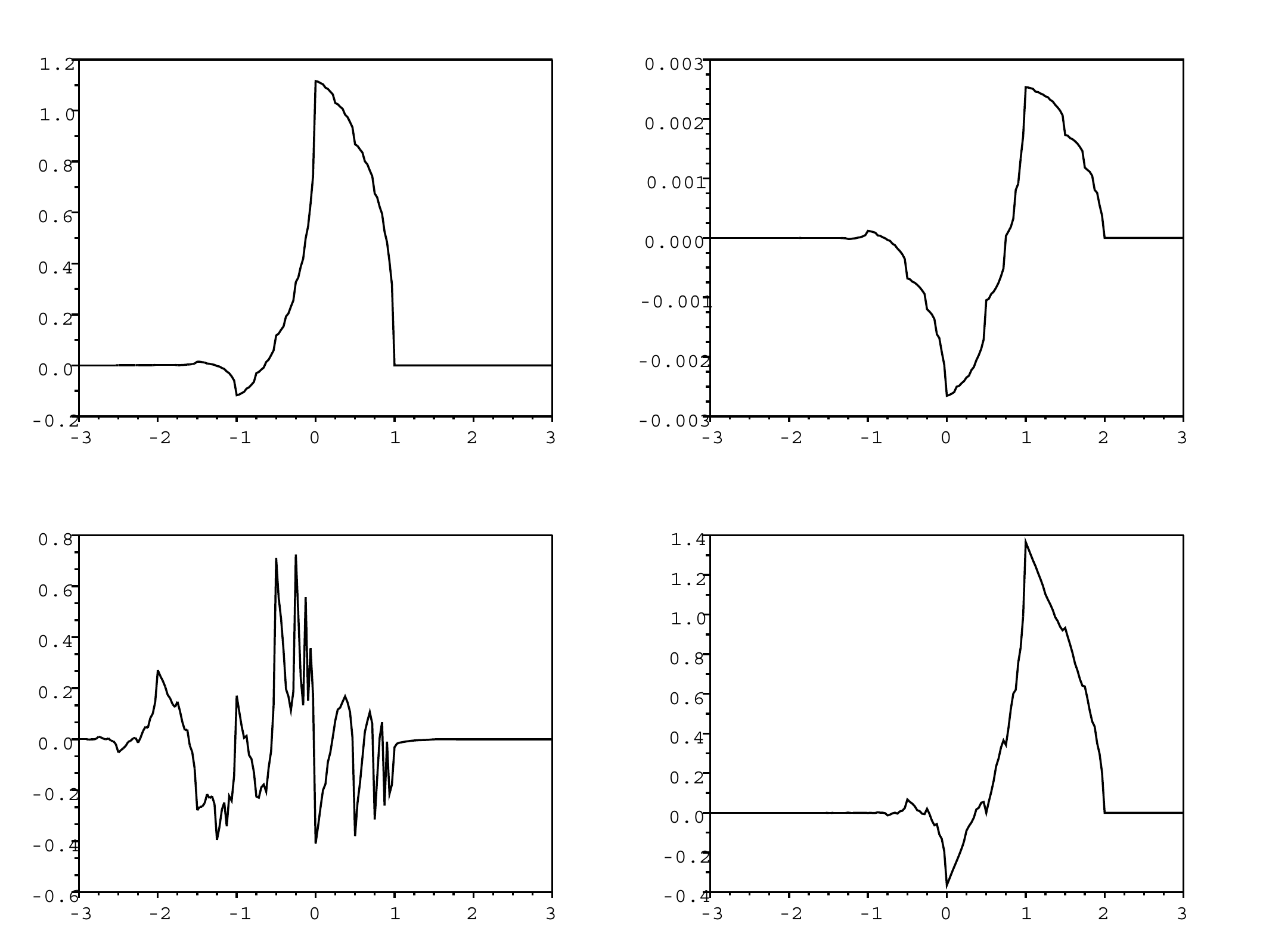}
\caption{Orthonormal matrix scaling function with $r=2$}
\label{fig:G2x2}
\end{center}
\end{figure}

\begin{figure}[!t]
\begin{center}
\includegraphics[width=10cm]{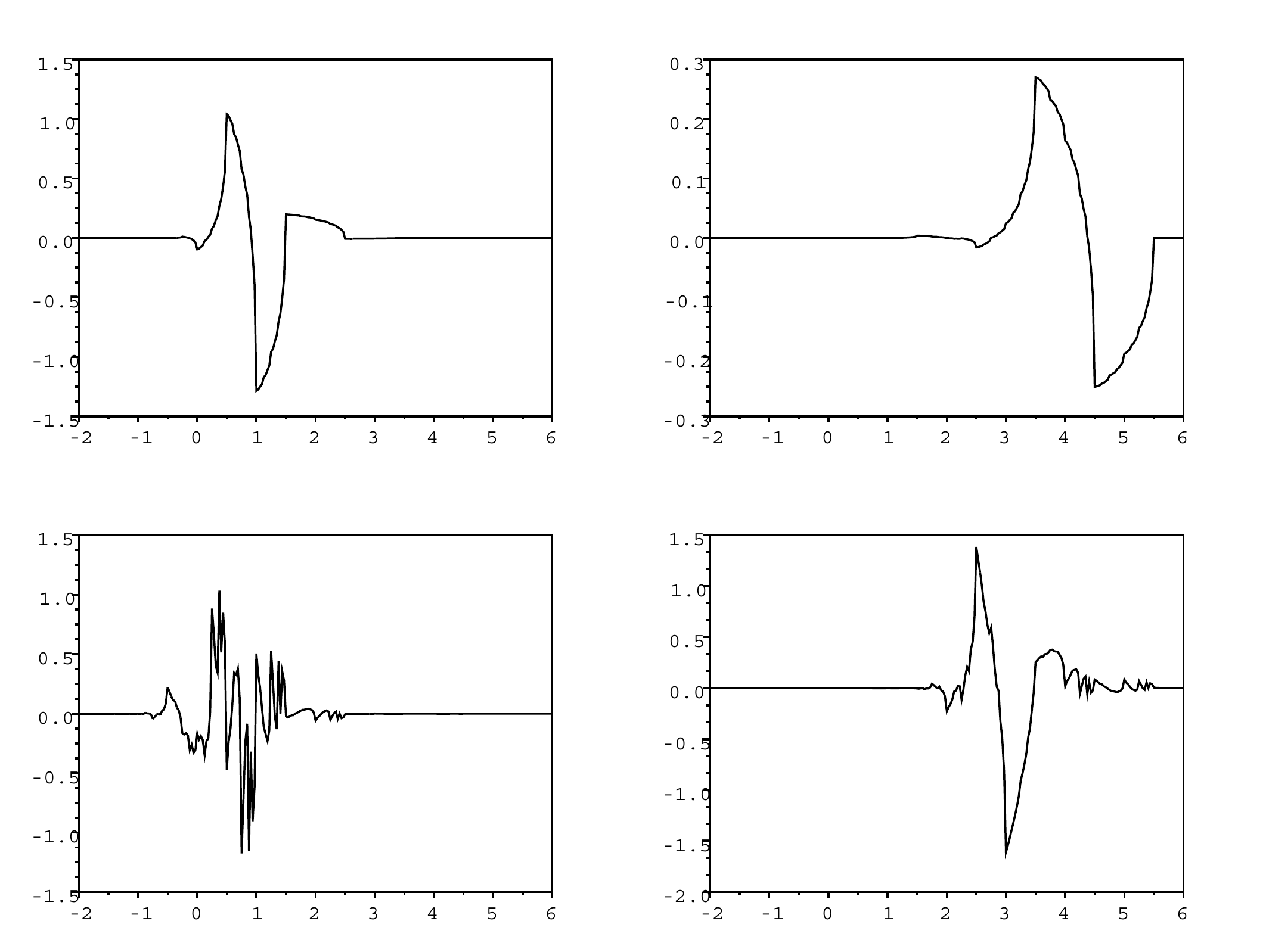}
\caption{Orthornormal 2-channel wavelet}
\label{fig:WG2x2}

\end{center}
\end{figure}

Using the MCW Algorithm  we find that the  symbol $\Bb(z)$ of the
corresponding orthonormal two-channel wavelet has the coefficients
$\Bb_k$, $k=-2,\ldots,7,$ given in Table~\ref{tab:WG2x2}. The plot
of the wavelet is represented in Fig.~\ref{fig:WG2x2}.

\begin{table}[!t]
\renewcommand{\arraystretch}{1.3}
 \caption{Coefficients of the 2-channel orthonormal wavelet}   \label{tab:WG2x2}
    \centering
        \begin{tabular}{cc||cc}\hline
      $k$ & $\Bb_k$ &$k$ & $\Bb_k$ \\\hline\hline
      $-2$&        $ \left(\begin{array}{cc}
0.8140199&0\\ -0.0014406&0  \end{array}\right)$&$3$ &
 $ \left(  \begin{array}{cc}  -0.0044401&0.0519219\\ -0.0000359&0.331059 \end{array}\right)$ \\[0.3cm]
$-1$ & $ \left(\begin{array}{cc} -1.1322992&0\\ 0&0    \end{array}\right)$ & $4$ &
 $ \left(\begin{array}{cc}     0.0009821&0.2546764\\ -0.0000192&0.1253084 \end{array}\right)$\\[0.3cm]
$0$ & $ \left(\begin{array}{cc}    0.1915220&0.0032939\\ 0.0037742&-0.0000058   \end{array}\right)$ & $5$ &
$ \left(\begin{array}{cc}   0.0007061&0.1209272\\ -0.0000047&0.0153323 \end{array}\right)$\\[0.3cm]
$1$ & $ \left(\begin{array}{cc}     0.1360351&-0.0002724\\ -0.0018743&0.6524543 \end{array}\right)$&$6$ &
$ \left(\begin{array}{cc}       0&-0.2427089\\ 0&0.0047518 \end{array}\right)$\\[0.3cm]
$2$ & $ \left(\begin{array}{cc}  -0.0065259&-0.0133465\\ -0.0003995&-1.1300526  \end{array}\right)$&$7$ &
 $ \left(\begin{array}{cc}     0&-0.1744916\\ 0&0.0011525 \end{array}\right)$\\
\hline
        \end{tabular}
\end{table}

%%%%%%%%%%%%%%%%%%%%%%%%%%%%%%%%%%%%%%%%%%%%%%%%%%%%%%%%%%%%%%%%%%%%%%%%%
\subsection{A three-channel example}\label{sec:numr3}
\begin{table}[!t]
\renewcommand{\arraystretch}{1.3}
 \caption{Coefficients of the 3-channel orthonormal matrix scaling function}   \label{tab:G3x3}
    \centering
        \begin{tabular}{cc||cc}\hline
      $k$ & $\Ab_k$ &$k$ & $\Ab_k$ \\ \hline\hline
      $-8$&        $ \left(\begin{array}{ccc} 0&0&0\\ 0&0&0\\ 0.04&0.03125&0    \end{array}\right)$&      $-2$ & $ \left(\begin{array}{ccc} -0.0715410&0.0216501&0\\
 0.2697914&-0.0206268&0\\ -0.04&-0.03125&0  \end{array}\right)$ \\  [0.8cm]
$-7$ & $ \left(\begin{array}{ccc}0&0&0\\ 0&0&0\\ -0.04&-0.03125&0\end{array}\right)$ & $-1$ & $ \left(\begin{array}{ccc}  0.2874359&0.0253824&0\\ -0.0756559&-0.1655344&0\\ 0.04&0.03125&0       \end{array}\right)$\\ [0.8cm]
$-6$ & $ \left(\begin{array}{ccc}  -0.0022120&-0.0017281&0\\ 0.0000731&0.0000571&0\\ -0.12&-0.09375&0      \end{array}\right)$ & $0$ & $ \left(\begin{array}{ccc}  1.0740925&-0.0422890&0\\ -0.0129914&0.3501563&0\\ 0&0&1      \end{array}\right)$\\[0.8cm]
$-5$ & $ \left(\begin{array}{ccc}    0.0087982&0.0060117&0\\ 0.1481828&-0.0017879&0\\ 0.12&0.09375&0   \end{array}\right)$&$1$ & $ \left(\begin{array}{ccc}0.7231598&-0.0073621&0\\ -0.0053020&1.1683135&0\\ 0&0&1\end{array}          \right)$\\[0.8cm]
$-4$ & $ \left(\begin{array}{ccc}  -0.0003396&0.0004183&0\\ -0.2568731&0.0059962&0\\ 0.12&0.09375&0       \end{array}\right)$&$2$ & $ \left(\begin{array}{ccc}        0&0.0219488&0\\ 0&0.6644173&0\\ 0&0&0\end{array}\right)$\\[0.8cm]
$-3$ & $ \left(\begin{array}{ccc} -0.0193938&-0.0240319&0\\ -0.0672249&-0.0009911&0\\ -0.12&-0.09375&0         \end{array}\right)$& & \\
\hline
        \end{tabular}
\end{table}

We now construct a $3\times 3$ positive definite parahermitian
interpolatory symbol
$$\Cb(z)=\left(
  \begin{array}{ccc}
    c_{11}(z) & c_{12}(z) & c_{13}(z) \\ c_{12}(z^{-1}) & c_{22}(z) &
    c_{23}(z) \\ c_{13}(z^{-1}) & c_{23}(z^{-1}) & c_{33}(z)
  \end{array}\right)$$
satisfying $\Cb(1)=2\Ib,\ \Cb(z)+\Cb(-z)=2\Ib$. It means that, as in the previous example, the
diagonal elements should be the symbols of interpolatory scalar
schemes, with factor $(z+1)$ of order $m_1,m_2,m_3$, respectively. The off-diagonals should certainly contain a
$(z^2-1)$ factor and must satisfy $c_{ij}(z)=-c_{ij}(-z)$ (see \cite{ContiCotroneiSauer07}).  We
take on the diagonal the Deslaurier-Dubuc filters with
$m_1=2,m_2=4,m_3=2$, that is:
$$c_{11}(z)=\frac 12 \frac {(z+1)^2}{z},\ c_{22}(z)=-\frac 1{16}\frac{(z^2 - 4z + 1)(z + 1)^4}{z^3},\ c_{33}(z)=c_{11}(z).$$

Since $(z+1)^2$ is a common factor, repeating the consideration
done in \cite{ContiCotroneiSauer08}, we require the following
symbols on the off-diagonal
$$
c_{12}(z)=\lambda_1 \,z \, (z^2 - 1)^3,\,
c_{13}(z)=\lambda_2 \,z \,(z^2 - 1)^4,\,
c_{23}(z)=\lambda_3 \,z \,(z^2 - 1)^4.
$$
The following parameter choice
$$\lambda_1=1/20,\, \lambda_2=1/50,\,\lambda_3=1/64$$
gives positive definiteness. In this case, the canonical spectral
factor $\Ab(z)$, symbol of the orthonormal refinable function
$\Fb$, has the coefficients given in Table \ref{tab:G3x3}. The
plot of the corresponding scaling function is represented in
Fig.~\ref{fig:G3x3}.

\begin{table}[!t]
\renewcommand{\arraystretch}{1.3}
 \caption{Coefficients of the 3-channel orthonormal matrix wavelet}   \label{tab:WG3x3}
    \centering
        \begin{tabular}{cc||cc}\hline
      $k$ & $\Bb_k$ &$k$ & $\Bb_k$ \\ \hline\hline
      $-2$&        $ \left(\begin{array}{ccc}  -0.0031659&0&0\cr 0.0001046&0&0\cr 0&0&0      \end{array}\right)$&      $5$ & $ \left(\begin{array}{ccc} 0.1684248&0.0212803&0.0595696\cr -0.0017438&0.0229928&0.1406681\cr -0.0001092&-0.0004847&-0.0160288        \end{array}\right)$ \\  [0.8cm]
$-1$ & $ \left(\begin{array}{ccc}   0&0&0\cr -0.6416072&0&0\cr 0&0&0   \end{array}\right)$ & $6$ & $ \left(\begin{array}{ccc} -0.0017816&-0.0078392&-0.1305600\cr -0.0009269&0.0105339&0.0451634\cr 0&0&-0.0026508            \end{array}\right)$\\ [0.8cm]
$0$ & $ \left(\begin{array}{ccc}0.0297628&-0.7705220&-0.0001305\cr 1.1269644&0.0254539&0\cr 0.0367180&0.0169580&-0.9720037            \end{array}\right)$ & $7$ & $ \left(\begin{array}{ccc}    -0.0012167&-0.0056664&-0.0773112\cr -0.0015390&-0.0062807&-0.1222042\cr 0&0&0.0026508        \end{array}\right)$\\[0.8cm]
$1$ & $ \left(\begin{array}{ccc} -0.0940031&1.1447719&0\cr -0.3500526&-0.0169113&-0.0264419\cr -0.0367180&-0.0169580&0.9720037            \end{array}\right)$&$8$ & $ \left(\begin{array}{ccc}   -0.0000291&-0.0001188&0.0406917\cr -0.0008803&-0.0035956&-0.0543552\cr 0&0&0   \end{array}          \right)$\\[0.8cm]
$2$ & $ \left(\begin{array}{ccc} -0.2583832&-0.2560664&-0.0751903\cr -0.1287475&-0.0115762&0.0489688\cr -0.0096495&-0.0033517&-0.0409056            \end{array}\right)$&$9$ & $ \left(\begin{array}{ccc}   0&0&0.0295229\cr 0&0&0.0373432\cr 0&0&0      \end{array}\right)$\\[0.8cm]
$3$ & $ \left(\begin{array}{ccc}   -0.0908757&-0.1606346&0.0018325\cr -0.0045318&-0.0170932&-0.0567853\cr 0.0096495&0.0033517&0.0409056       \end{array}\right)$&   $10$  & $\left(\begin{array}{ccc}    0&0&0.0007057\cr 0&0&0.0213611\cr 0&0&0         \end{array}\right)$\\[0.8cm]
$4$ & $\left(\begin{array}{ccc} 0.2512676&0.0347952&0.1508696\cr 0.0029602&-0.0035235&-0.0337223\cr 0.0001092&0.0004847&0.0160288         \end{array}\right)$& & \\
\hline
        \end{tabular}
\end{table}

\begin{figure}[!t]
\begin{center}
\includegraphics[width=12cm]{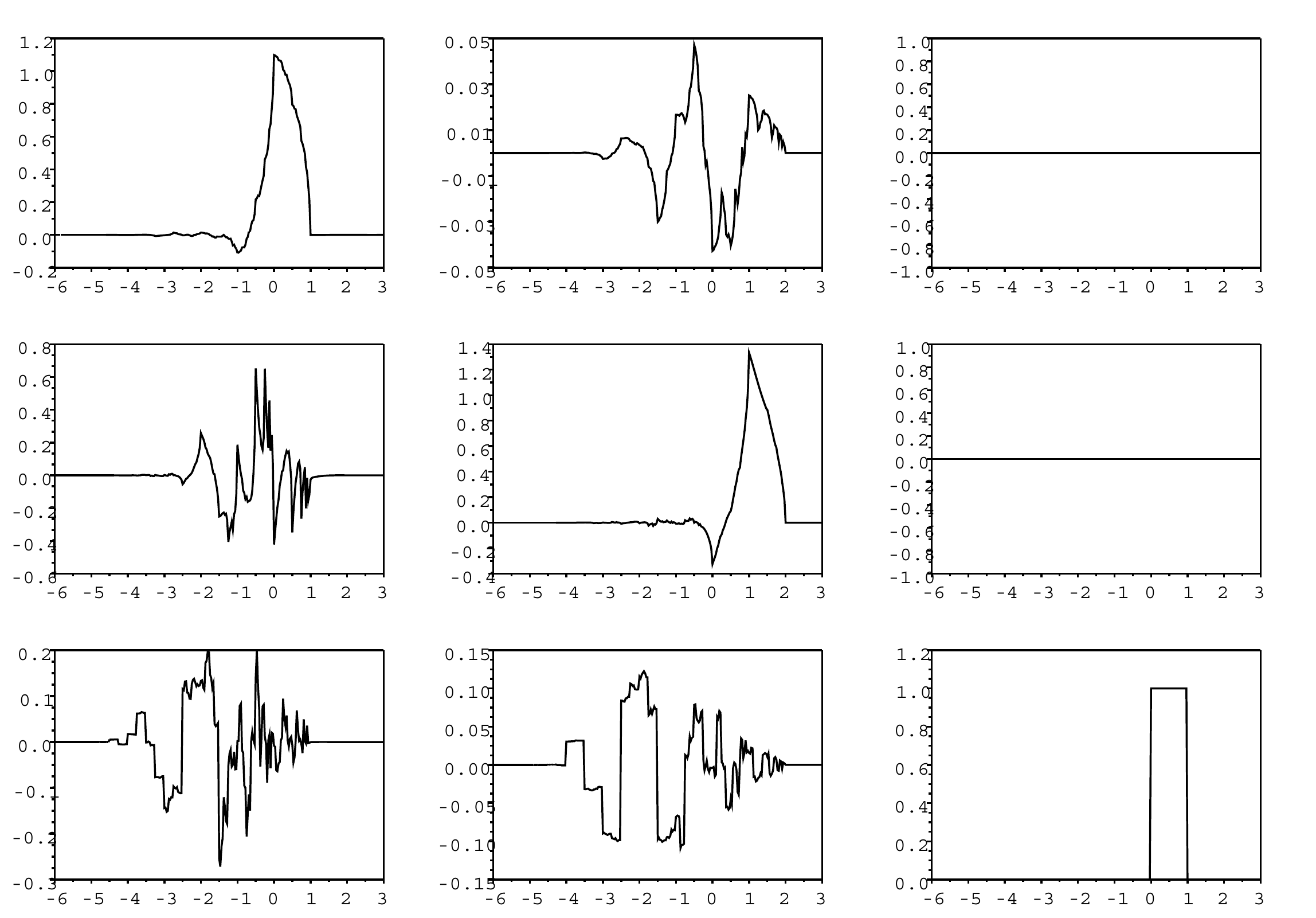}
\caption{Orthonormal matrix scaling function with $r=3$}
\label{fig:G3x3}
\end{center}
\end{figure}

\begin{figure}[!t]
\begin{center}
\includegraphics[width=12cm]{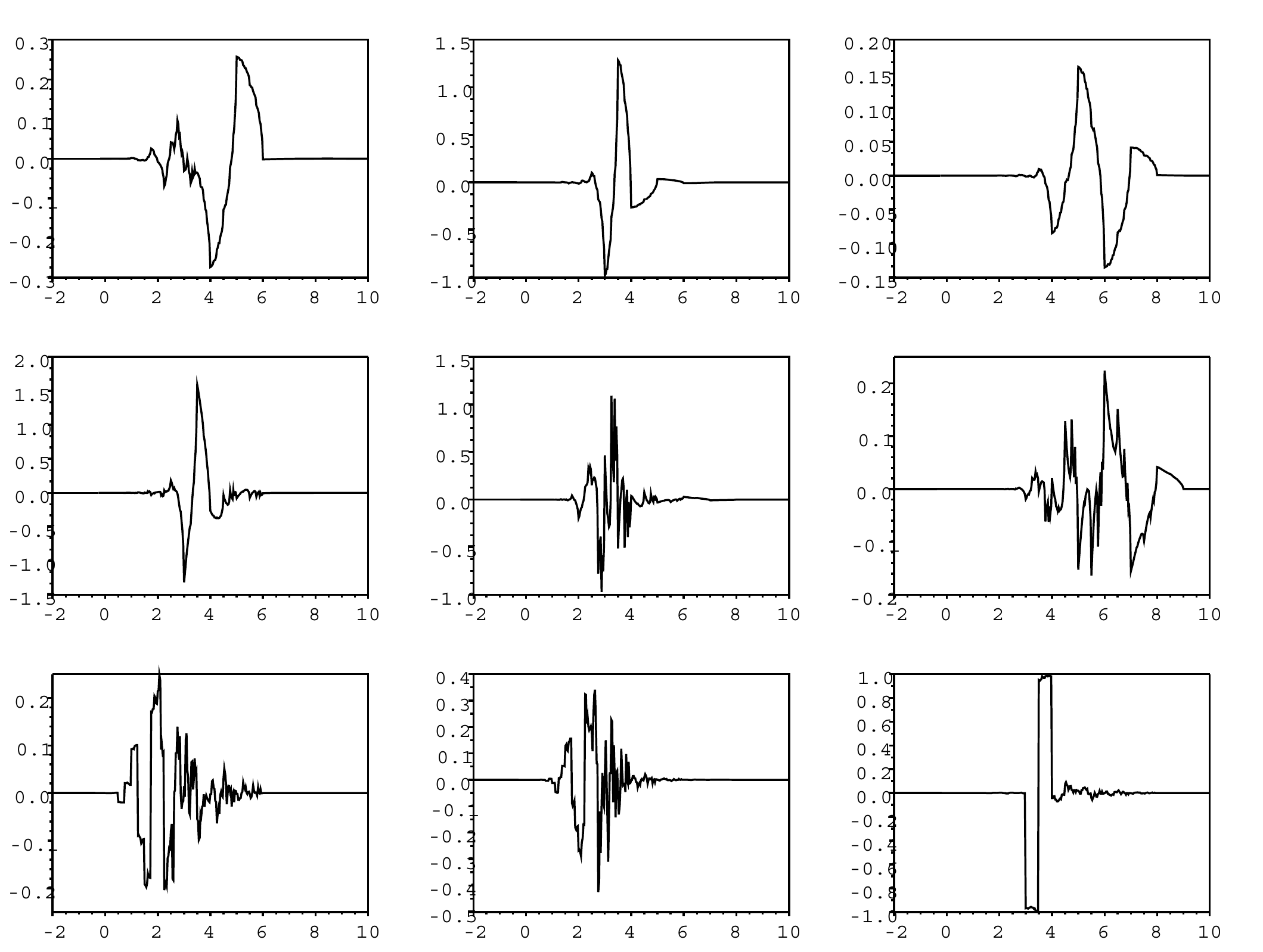}
\caption{Orthonormal 3-channel wavelet}
\label{fig:WG3x3}

\end{center}
\end{figure}

The MCW algorithm applied to this symbol, gives the orthonormal wavelet whose symbol coefficients are
given in Table \ref{tab:WG3x3} and whose plot is represented in Fig. \ref{fig:WG3x3}.

%%%%%%%%%%%%%%%%%%%%%%%%%%%%%%%%%%%%%%%%%%%%%%%%%%%%%%%%%%%%%%%%%%%%%%
\section{Conclusions}
This paper discusses a way to construct orthonormal multichannel
wavelets from convergent full rank orthogonal symbols. As to our
knowledge, the explicit algorithm for multichannel wavelet
construction (the MCW construction algorithm) here derived, based on the strategy for rank-1
filters given in \cite{MicchelliSauer97a}, is the first algorithm proposed so
far in the literature.\par
As already pointed out, multichannel wavelets provide an effective
tool for the analysis of multichannel signals, that is
vector-valued signals whose components come from different sources
with possible intrinsic correlations, for example seismic waves,
brain activity (EEG/MEG) data, financial time series, color
images. Many multichannel signals exhibit a high correlation which can be revealed and exploited by a  MCW analysis,
with filters suitably tailored to the specific data and application.
Future investigations include the construction of ''data-adapted'' MCW bases and  their application in many problems where vector-valued signals have to be processed for compression, denoising, etc.

%%%%%%%%%%%%%%%%%%%%%%%%%%%%%%%%%%%%%%%%%%%%%%%%%%%%%%%%%%%%%%%%%%%%%%

\end{document}